\theoremstyle{plain}
\newtheorem{thm}{Theorem}[section]
\newtheorem{lemm}[thm]{Lemma}
\newtheorem{prop}[thm]{Proposition}
\newtheorem{cor}[thm]{Corollary}
\theoremstyle{definition}
\newtheorem{rem}[thm]{Remark}
\newtheorem{constr}[thm]{Construction}
\newtheorem{assumptions}[thm]{Assumption}
\numberwithin{equation}{section}
\theoremstyle{definition}
 \newcommand\PP{{\mathbb{P}}}
          \newcommand\oo{{\mathcal O}}
\title{Bilinkage in codimension $3$ and canonical surfaces of degree
$18$ in $\PP^5$}
\author{Grzegorz Kapustka and Micha{\l} Kapustka}
\keywords{Pfaffian, bilinkage, surface of general type}
\subjclass[2000]{Primary: 14J32}
\begin{document}
\setcounter{tocdepth}{1}
\begin{abstract} We study the behavior of the bilinkage process in codimension $3$.
In particular, we construct a smooth canonically embedded and linearly normal surface of
general type of degree $18$ in $\mathbb{P}^5$; this is probably the
highest degree
  such a surface may have. Next, we apply our construction to find a geometric description of Tonoli Calabi--Yau threefolds in $\PP^6$.
\end{abstract}
\maketitle
\section{Introduction} Let $S$ be a minimal surface of general type defined over
the field of complex numbers. Then, by the inequality of Noether
and Bogomolov--Miyaoka--Yau, we have $$2\chi(\oo_{S})-6\leq K_S^2\leq
9\chi(\oo_S).$$
On the other hand, if we assume that the canonical system of $S$ gives a
birational map, then by the Castelnuovo inequality we deduce $3\chi(\oo_S)-10\leq
K_S^2.$ Note that we know from \cite{Bombieri} that $5K_S$ always gives a birational morphism for surfaces of general type. In this context, it is a natural problem (cf.~\cite{Ashikaga}, \cite{Catanese}) to
construct surfaces of general type with birational canonical map in the
  range
$3\chi(\oo_{S})-10\leq K_S^2\leq 9\chi(\oo_S)$. Many works are related to this
problem \cite{Ashikaga}, \cite{HirzebruchVandeVen}, \cite{Sommese},
however, the part with $\chi(\oo_S)\leq 7$ seems out of reach with those methods.
The general surface of general type with $\chi(\oo_S)=7$ and $h^1(\oo_S)=0$
should admit a birational canonical map to $\mathbb{P}^5$.
The image of such a map is a subcanonical surface of codimension $3$ in $\PP^5$.

On the other hand, it was proven in \cite{Walter} that
  submanifolds  $X\subset \PP^N$ of codimension $3$ in
projective spaces with $N-3$ not divisible by $4$ that are subcanonical are
  Pfaffian, i.e.~their ideal sheaf admits a Pfaffian resolution of the form \begin{equation}\label{pfaff-resol} 0\to\mathcal{O}_{\mathbb{P}^N}(-2s-t)\to E^{\ast}(-s-t)\xrightarrow{\varphi}
E(-s)\xrightarrow{\psi} \mathcal{I}_X\to 0 \end{equation} where $E$ is a vector bundle of odd rank and $s,t\in \mathbb{Z}$.
The study of codimension $3$ manifolds is reduced
in this way to the study of the Hartshorne--Rao modules of
  submanifolds. However, complicated algebraic problems appear when we want to
classify such modules (see \cite{CYTon1}).
Catanese \cite{Catanese} applied the Pfaffian
construction in order to construct canonically embedded surfaces in $\PP^5$ and
found constructions
of surfaces with $K_S^2 \leq 17$. Later, in his thesis \cite{Tonoli}, Tonoli
constructed Calabi--Yau threefolds in $\PP^6$, but
  only found examples of
degree $\leq 17$. Since the Pfaffian construction becomes more and more
complicated when the degree increases, it is natural to
  ask whether there are any canonical surfaces of degree $\geq 18$ in $\PP^5$. Our main result is the
following:
\begin{thm}\label{0}
There exists a surface of general type with $K^2=18$, $p_g=6$, $q=0$
whose canonical map is an isomorphism onto its image.
\end{thm}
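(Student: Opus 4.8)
\emph{Reformulation and numerical constraints.} The plan is to produce a smooth non-degenerate surface $S\subset\PP^5$ with $\omega_S\cong\oo_S(1)$, $p_g=6$ and $q=0$; for such a surface $\oo_S(1)$ is the complete canonical system (once linear normality is checked), so the canonical map is literally the inclusion $S\hookrightarrow\PP^5$, an isomorphism onto its image. Since $S$ is then a subcanonical submanifold of codimension $3$ with $N-3=2$ not divisible by $4$, its ideal sheaf admits a Pfaffian resolution \eqref{pfaff-resol}, and dualizing that resolution gives $\omega_S\cong\oo_S(2s+t-6)$; the canonical condition forces $2s+t=7$. Riemann--Roch then pins down the numerics, $\chi(\oo_S(n))=9n^2-9n+7$, and comparing with $h^0(\oo_{\PP^5}(n))=\binom{n+5}{5}$ shows $S$ cannot be projectively normal: already $h^0(\oo_S(2))=25>21$, so $h^1(\mathcal I_S(2))\ge 4$. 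Thus $S$ carries a genuinely nonzero Hartshorne--Rao module $M=\bigoplus_n H^1(\mathcal I_S(n))$, and it is this module, which controls the even liaison (bilinkage) class of $S$ in codimension $3$, that we must realize.

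\emph{Why $E$ must be non-split.} First I would dispose of the easy case. If $E$ were a direct sum of line bundles, then $\varphi$ would be an honest antisymmetric matrix of forms; self-duality together with $2s+t=7$ and positivity of the off-diagonal degrees leaves, up to the obvious degenerate configurations, only the resolution
\begin{equation}
0\to\oo(-7)\to\oo(-4)^{\oplus 7}\xrightarrow{\varphi}\oo(-3)^{\oplus 7}\to\mathcal I_S\to 0,
\end{equation}
whose seven cubic Pfaffians cut out a surface of degree $14$, not $18$ (the Hilbert polynomial of this locus has leading term $7n^2$). Hence no split resolution yields our surface, and the real content of the theorem is to manufacture a non-split vector bundle $E$ of odd rank, together with an antisymmetric $\varphi$ of the correct Chern classes, whose degeneracy locus has the right degree.

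\emph{Construction by bilinkage.} The key step is to build $(E,\varphi)$ by an explicit bilinkage rather than by guessing a matrix. I would begin with a simpler object in the even liaison class dictated by $M$ — concretely, a Pfaffian threefold $T\subset\PP^6$ (or $\PP^5$) cut out by a seed skew matrix together with a chosen subscheme $S_0\subset T$ of lower degree — and perform an elementary ascending bilinkage step on $S_0$ inside $T$. Keeping careful track of twists, this raises the degree to $18$ and, crucially, glues the split seed bundle into the required non-split $E$, which appears as a syzygy bundle in a presentation of $M$; the map $\varphi$ is recovered by symmetrizing the two linkage maps. The bookkeeping of twists along the bilinkage is exactly what produces $\omega_S\cong\oo_S(1)$ and forces $p_g=6$, $q=0$, so that the invariants may be read off directly from the resulting resolution \eqref{pfaff-resol}. (This is also the mechanism that should connect $S$, via $T$, to the Tonoli Calabi--Yau threefolds promised in the abstract.)

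\emph{Smoothness — the main obstacle — and conclusion.} The hard part is that $S$ is the locus where $\varphi$ drops rank, and since $E$ is non-split and not globally generated, Bertini does not apply to this degeneracy locus. I would attack smoothness in two ways. First, show that a suitable twist of $E$ (equivalently, the bundle of antisymmetric homomorphisms) is globally generated off a subset of codimension $\ge 4$; a Kleiman--Bertini transversality argument then gives, for general $\varphi$, a degeneracy locus that is smooth of the expected codimension $3$ where it is nonempty, and Fulton--Lazarsfeld connectedness makes it irreducible. Second, as a safeguard in the spirit of Tonoli, I would exhibit one explicit $\varphi$ over a finite field, verify smoothness and primality of the ideal by a Jacobian and radical computation, and lift to characteristic $0$ by semicontinuity. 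Once $S$ is smooth of degree $18$ with $\omega_S\cong\oo_S(1)$, the vanishing $h^1(\mathcal I_S(1))=0$ shows $\oo_S(1)$ induces the complete canonical system; therefore $\phi_{|K_S|}$ is the embedding $S\hookrightarrow\PP^5$, an isomorphism onto its image, and $K_S$ ample gives $K_S^2=\deg S=18$ with $S$ of general type, as required.
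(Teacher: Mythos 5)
Your overall strategy --- raise the degree to $18$ by an ascending bilinkage and read off the invariants from the resulting resolution --- is indeed the paper's strategy, but the proposal leaves out the two steps that carry essentially all of the difficulty. First, you never identify or construct the starting object of the bilinkage. The paper starts from the projection $D_9$ of the triple Veronese surface $V_9\subset\PP^9$ to $\PP^5$ and bilinks it through a complete intersection of two cubics; the whole point is that a \emph{generic} such projection lies on only one cubic, so no such complete intersection exists, and one must prove (Proposition \ref{prop special Lambda}) that there is a special, codimension-one locus of centers $\Lambda$ for which $D_9^\Lambda$ is simultaneously smooth and contained in a pencil of cubics. This is done by exhibiting an explicit example over $\mathbb{F}_{17}$ and then lifting it to characteristic $0$ via the Smith normal form of a $55\times 56$ matrix over $\mathbb{Q}[\lambda]$ and a root of a degree-$60$ factor. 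Your phrase ``begin with a simpler object in the even liaison class dictated by $M$'' assumes exactly what has to be proved, and your fallback ``lift to characteristic $0$ by semicontinuity'' does not work here: containment in two cubics is a \emph{closed} (special) condition, not an open one, so semicontinuity goes the wrong way and a genuine lifting argument is required.

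Second, your smoothness argument would fail where it matters most. The complete intersection $Y$ of the two cubics has $60$ isolated (non-nodal) singular points, and every surface in the biliaison class $|D_9+H|$ meets $D_9$ precisely at these points; so the system is base point free only on $Y\setminus\mathrm{Sing}(Y)$, and neither Bertini nor a Kleiman--Bertini transversality argument can establish smoothness of $S_0$ at those $60$ points. (Relatedly, the paper notes that for this $D_9$ even the candidate bundle $Syz^1(M)\oplus 2\oo_{\PP^5}$ admits no skew map defining a Pfaffian variety, so ``general section of $\bigwedge^2E(t)$'' arguments are unavailable.) The paper's proof of Theorem \ref{gen type surfaces of deg 18} handles these points separately: it verifies (computationally in characteristic $17$, then transfers to characteristic $0$) that $S_0$ and $D_9$ meet transversally exactly at $\mathrm{Sing}(Y)$, and deduces smoothness of $S_0$ there from the fact that both surfaces lie on a \emph{smooth} cubic; only away from $\mathrm{Sing}(Y)$ does Bertini apply. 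Your numerical preliminaries ($2s+t=7$, $\chi(\oo_S(n))=9n^2-9n+7$, $h^1(\mathcal{I}_S(2))\geq 4$, non-splitness of $E$) are correct and consistent with the paper, but as it stands the proposal is a plan rather than a proof: the existence of the special center of projection and the treatment of the $60$ singular points of $Y$ are both missing.
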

We describe the construction of such surfaces in Section \ref{sec-canonical18} concluding with Theorem \ref{gen type surfaces of
deg 18}.
We expect, by \cite{CYTon1}, that this is the highest degree of
  such a
canonically embedded surface in $\PP^5$.
In fact, in Theorem \ref{gen type surfaces of deg 18}, we find an explicit description of a $20$-dimensional
   subfamily of the
at least
  36-dimensional family of degree $18$ canonical surfaces in $\PP^5$.
Having our existence result, it is a natural problem (see \cite{CYTon1}) to find a Pfaffian
resolution for a general canonical surface of degree $18$ in $\mathbb{P}^5$.

The idea of the proof of Theorem \ref{0} is to construct a special
  bilinkage. Recall that
  the relation of \emph{linking} (or equivalently \emph{liaison}) was introduced in \cite{PS}. Two closed subschemes
$V$, $W$ of $\PP^N$ are algebraically linked by a subscheme $X$
if they are equidimensional without embedded
components and $X$
is a complete intersection containing them such that $\mathcal{I}_{W|\PP^N}/\mathcal{I}_{X|\PP^N}=\mathcal{H}om(\oo_V,\oo_X)$ and $\mathcal{I}_{V|\PP^N}/\mathcal{I}_{X|\PP^N}=\mathcal{H}om(\oo_W,\oo_X)$. When additionally
  $W$ and $V$ do not have common components (this is the situation we are interested in) then
they are linked if $V\cup  W=X$. We say that two irreducible varieties of the same dimension are \emph{bilinked} if they are linked in two
  steps, i.e.~there exists a scheme $T$ such that $T$ is linked with both $W$ and $V$.
We shall also need another point of view
  on bilinkage: the notion of \emph{generalized divisors} introduced by Hartshorne. With the notation as above with $V$ and $W$ bilinked we see that $V$ and $W$ are of codimension
one in $X$
  and thus can be seen
  as ``divisors'' in $X$. Hartshorne
   \cite[\textsection 2]{Hartshornebil}
  generalizes the usual notion of Cartier divisor and linear equivalence of divisors ``$\simeq$''  (in our case $X$ is very singular
  so $V$, $W$ are not Cartier
  divisors) in order to obtain a relation
$V\simeq W+ nH$ where $n\in \mathbb{Z}$ and $H$ is
  a hyperplane section of $X\subset \PP^N$. 

  Let us  now describe our construction of the surface in Theorem \ref{0}. We first take a special central projection to
$\mathbb{P}^5$
of the image $V_9$ of the third Veronese embedding of $\mathbb{P}^2$ in
$\mathbb{P}^9$ and perform a bilinkage.
More precisely, we find a special $\mathbb{P}^3\subset \mathbb{P}^9$ such that the image $D_9\subset\mathbb{P}^5$ of
$V_9\subset\mathbb{P}^9$ by the projection centered in this $\mathbb{P}^3$ is smooth and
contained in the complete intersection of two cubics. Note here that
  this image is contained in a
single cubic for a generic projection (cf.~\cite[Rem.~5.5]{GKapustkaprojdelpezzo}).
Then we perform a bilinkage of $D_9$ through the intersection of these two
  cubics,
obtaining a special smooth canonically embedded general type surface of degree
$18$ (cf.~\cite{CYTon1}).
In Proposition \ref{lem can surfaces are bilinked to del pezzo}, we show that all
the known examples of canonical surfaces in $\PP^5$ from \cite{Catanese}
can be obtained
  via
  that bilinkage construction.
We believe that our construction can be applied in a more general
classification problems concerning submanifolds of codimension $3$.

Since the method of bilinkage
  works better than the Pfaffian construction in
the case of surfaces, we
  apply it in Section \ref{sec-Ton} to study
Tonoli
 Calabi--Yau threefolds in $\PP^6$. Those threefolds were constructed
   in \cite{Tonoli}
(cf.~\cite{SchreyerTonoli}, \cite{CYTon1}) by using the
Pfaffian resolution (\ref{pfaff-resol}).
Our first result, Proposition \ref{prop bilinkage dP 7 and CY 16}, says that
starting from del Pezzo threefolds of degree $d\leq 7$ in $\PP^6$ we obtain
families of
Tonoli Calabi--Yau threefolds of degree $d+9$ by performing the
bilinkage construction through the intersection of two cubics
(cf.~\cite{HulekRanestad}).
In the remaining degree $17$, there are three families of Calabi--Yau threefolds that we call
after Tonoli of type $k=8$, $k=9$, and $k=11$.
The corresponding degree $8$ del Pezzo threefold is the double Veronese
embedding of $\PP^3$ projected to $\PP^6$.
  As before, we can find a special center of projection such that the
image of the del Pezzo threefold of degree 8 is smooth and contained in a
  three-dimensional space of cubics. Note that it is
contained in no
  cubic for a general projection.
So we can perform a bilinkage and
  its result is a natural
  degeneration of
the degree $17$ Tonoli family of type $k=9$.
  Note that the examples of type $k=8$
  and $k=11$ cannot be
constructed by bilinkage. This shows that the construction that we propose in \cite[Thm.~1.3]{CYTon1}, by operations on vector
  bundles from the Pfaffian
  resolution,
is, in this context, a strict generalization of the one using bilinkages.
We close Section \ref{sec-Ton}
  with the
construction of a singular degree $18$ threefold in $\PP^6$ birational to a
Calabi--Yau threefold.

In
  Section
  \ref{subsec bilinkages} we study the relation between the Pfaffian resolutions \ref{pfaff-resol} of two bilinked subvarieties of codimension $3$.
Finally, in Section \ref{sec-unpr}, we discuss relations between constructions by bilinkage and by
  unprojection, finding that the former are more
general in our situation. This confirms the general Reid philosophy about the relation between these
constructions. As a result, we analyze an example of
non-Gorenstein unprojection that should be of independent interest.
\section*{Acknowledgments} We would like to thank Ch.~Okonek for all his
advice
and support, and
  J.~Buczy\'nski, S.~Cynk, L.~Gruson, A.~Kresch, A.~Langer,
P.~Pragacz for comments and discussions.
The use of Macaulay 2 was essential to guess the geometry.
The project was supported by MNSiW, N N201 414539 and by the Forschungskredit of
the University of Zurich.
  \section{Preliminaries} \label{sec bilink}
We shall apply the following construction to relate a given del Pezzo
surface $F\subset\mathbb{P}^5$ (resp.~del Pezzo threefold)
  to a
  surface $X\subset\mathbb{P}^5$ of
general type (resp.~Calabi--Yau threefold).
\begin{constr} \label{relation by bilinkage and deformation surf}
We write
$$\mathbb{P}^n\supset F \rightrightarrows X' \rightsquigarrow  X \subset
\mathbb{P}^n,  $$
where $F\rightrightarrows X'$ means that $F$ and $X'$ are bilinked and
$X'\rightsquigarrow X$ means that $X'$ is a degeneration of
  $X$, i.e.~there
is a proper flat family
over a disc such that $X'$ is its special element and $X$ a general one. Then we
say that $X$ is constructed from $F$ by the bilinkage construction.
\end{constr}
Before we study the possible applications of this construction, let us consider bilinkages of Pfaffian varieties in general.
\subsection{Bilinkages of Pfaffians} \label{subsec bilinkages}
Let us make some useful remarks on the construction of bilinkages between Pfaffian varieties by relating vector bundles defining them.  More precisely,
we aim at proving that under some
  assumptions, if two bundles
  $E$ and $F$ of odd rank differ by a sum of line bundles then the Pfaffian varieties
associated to general sections of their twisted wedge squares are in the same complete intersection biliaison class.

Let $X\subset \mathbb{P}^N$ be a Pfaffian variety defined by a section $\varphi
\in H^0(\bigwedge^2 E \otimes \mathcal{O}_{\mathbb{P}^N}(t))$ for some vector
bundle $E$ of rank $2r+1$ for some $r\in \mathbb{N}$ and $t\in \mathbb{Z}$.
Denote $s=c_1(E)+rt$.

The map $\varphi$ in the Pfaffian resolution (\ref{pfaff-resol}) is identified with the section
$$\varphi \in H^0(\mathbb{P}^N, {\textstyle\bigwedge}^2 E \otimes
\mathcal{O}_{\mathbb{P}^N}(t)),$$
and $\psi$ is the map
$$E(-s)\rightarrow \mathcal{I}_{X}=\mathrm{Im} (\psi)\subset {\textstyle\bigwedge}^{2r+1} E
\otimes \mathcal{O}_{\mathbb{P}^N}(rt-s)= \mathcal{O}_{\mathbb{P}^N}$$
defined as the wedge product with the
  $r$-th divided power of $\varphi$:
$$\frac{1}{r!} (\varphi\wedge \varphi \wedge \dots \wedge \varphi) \in
H^0(\mathbb{P}^N, {\textstyle\bigwedge}^{2r} E \otimes \mathcal{O}_{\mathbb{P}^N}(rt)).$$

\begin{assumptions}\label{assumptions for biliaison}
 $H^1(E^*(l))=0$ for $l\in \mathbb{Z}$.
\end{assumptions}
Observe that Assumption \ref{assumptions for biliaison} is satisfied when $E$ is obtained as the kernel of a surjective
map between decomposable bundles.

Under Assumption \ref{assumptions for biliaison} on $E$ we claim
that every hypersurface of degree
$d$ containing $X$ is defined as a Pfaffian hypersurface given by a section of
the bundle $\bigwedge^2 (E\oplus \mathcal{O}_{\mathbb{P}^N}(d-s-t))\otimes
\mathcal{O}_{\mathbb{P}^N}(t)$ of even rank $2r+2$.
Indeed, we can split the Pfaffian sequence into two short exact sequences:
\begin{gather*}
  0\to\mathcal{O}_{\mathbb{P}^N}(-2s-t)\to E^{\ast}(-s-t)\to F\to 0,\\
0 \to F\to E(-s)\xrightarrow{\psi} \mathcal{I}_X\to 0,
\end{gather*}
for some sheaf $F$.
Taking the cohomology of the second, we obtain an exact sequence
$$H^0(E(d-s))\to H^0(\mathcal{I}_X(d)) \to H^1(F(d)).$$
  On the other hand, by the first exact sequence we
  have
$$ H^1(E^{\ast}(d-s-t))\to H^1(F(d))\to H^2(\mathcal{O}_{\mathbb{P}^N}(d-2s-t)),$$
and it follows
  from the assumption on $E$ and the fact that $N\geq 3$ that $H^1(F(d))=0$.
We hence have a surjection
$$H^0(E(d-s))\to H^0(\mathcal{I}_X(d))$$
induced by $\psi$. Thus every hypersurface of degree $d$ in the ideal of $X$ is
identified with a section
$$\mathbf{s}\wedge\varphi^{(r)}\in H^0(\mathbb{P}^N, {\textstyle\bigwedge}^{2r+1} E \otimes
\mathcal{O}_{\mathbb{P}^N}(d-s+rt)),$$
for some section
$\mathbf{s} \in H^0(E(d-s))$.
It is now enough to observe that
$$\mathcal{O}_{\mathbb{P}^N}(d)= {\textstyle\bigwedge}^{2r+1} E \otimes
\mathcal{O}_{\mathbb{P}^N}(d-s+rt)= {\textstyle\bigwedge}^{2r+2}(E\oplus
\mathcal{O}_{\mathbb{P}^N}(d-s-t))\otimes \mathcal{O}_{\mathbb{P}^N}((r+1)t)$$
and the section $\mathbf{s}\wedge\varphi^{(r)}$ corresponds to the Pfaffian of
the section
$$(\varphi,\mathbf{s}) \in H^0({\textstyle\bigwedge}^2(E\oplus
\mathcal{O}_{\mathbb{P}^N}(d-s-t))\otimes
\mathcal{O}_{\mathbb{P}^N}(t))=H^0({\textstyle\bigwedge}^2 E \otimes
\mathcal{O}_{\mathbb{P}^N}(t)) \oplus H^0(E \otimes
\mathcal{O}_{\mathbb{P}^N}(d-s))$$
under the above identification.

\begin{lemm}\label{lem bilinkage 1} Let $X$, $E$, $r$, $s$, $\varphi$ be as
above and let $E$ satisfy Assumption \ref{assumptions for biliaison}.
Assume that $X$ is contained in two hypersurfaces $H_{d_1}$ and $H_{d_2}$ of
degree $d_1$ and $d_2$ respectively. Let $\mathbf{s}_i$ be the
section of $H^0(E(d_i-s))$ corresponding to $H_{d_i}$ for $i=1,2$. Then
$H_{d_1}\cap H_{d_2}$ is a codimension $2$ complete intersection
if and only if the section $(\varphi,\mathbf{s}_1,\mathbf{s}_2,\mathbf{l})$
in the decomposition
\begin{multline*}
H^0({\textstyle\bigwedge}^2(E\oplus \mathcal{O}_{\mathbb{P}^6}(d_1-s-t)\oplus
\mathcal{O}_{\mathbb{P}^6}(d_2-s-t))(t))\\
=H^0({\textstyle\bigwedge}^2E(t))\oplus
H^0(E(d_1-s))\oplus
H^0(E(d_2-s))\oplus H^0(\mathcal{O}_{\mathbb{P}^6}(d_1+d_2-2s-t))
\end{multline*}
defines a codimension $3$ Pfaffian variety for general $\mathbf{l}\in
H^0(\mathcal{O}_{\mathbb{P}^6}(d_1+d_2-2s-t))$. Moreover, if $Y$ is a Pfaffian
variety defined by a section
$(\varphi,\mathbf{s}_1,\mathbf{s}_2, \mathbf{l})$ then $Y$ is bilinked to $X$ via the
intersection of the two hypersurfaces $H_{d_1}$
and $H_{d_2}$.
\end{lemm}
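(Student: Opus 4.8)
The plan is to derive both assertions from the block structure of the skew form defining $Y$ together with Hartshorne's identification of biliaison with even liaison. Writing $L_i=\mathcal{O}(d_i-s-t)$, the section $(\varphi,\mathbf{s}_1,\mathbf{s}_2,\mathbf{l})$ is given, schematically, by the skew block map
\[
M=\begin{pmatrix} \varphi & \mathbf{s}_1 & \mathbf{s}_2\\ -\mathbf{s}_1^{\ast} & 0 & \mathbf{l}\\ -\mathbf{s}_2^{\ast} & -\mathbf{l} & 0\end{pmatrix}
\]
of odd size $2r+3$ on $E\oplus L_1\oplus L_2$. First I would compute its submaximal Pfaffians: deleting the row and column indexed by $L_j$ leaves the block $(\varphi,\mathbf{s}_i)$ with $i\neq j$, whose Pfaffian is, by the discussion preceding the lemma, exactly the equation of $H_{d_i}$. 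Hence $H_{d_1}$ and $H_{d_2}$ are among the generators of $\mathcal{I}_Y$, so $Y\subseteq H_{d_1}\cap H_{d_2}$ unconditionally, and the remaining Pfaffians $\mathrm{Pf}_i(M)$ for $i\le 2r+1$ cut $Y$ out inside this intersection.

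For the equivalence I would argue by dimension, using that a nonempty Pfaffian degeneracy locus of a skew map of rank $2r+3$ has codimension at most $3$. If $H_{d_1}\cap H_{d_2}$ is a codimension $2$ complete intersection, then for general $\mathbf{l}$ the remaining Pfaffians restrict to nonzero sections on it, so $Y$ is a proper generalized divisor there and has codimension exactly $3$. Conversely, if the two hypersurfaces shared a component, then $\mathrm{Pf}_{2r+2}=H_{d_2}$ and $\mathrm{Pf}_{2r+3}=H_{d_1}$ degenerate along it and one checks the Pfaffian complex fails to be acyclic, so no general $\mathbf{l}$ can make $Y$ of codimension $3$; I expect this converse to be the more delicate half.

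The bilinkage is the heart of the statement. Here I would show that $X$ and $Y$ are generalized divisors on the arithmetically Cohen--Macaulay scheme $C=H_{d_1}\cap H_{d_2}$ whose classes differ by a multiple of the hyperplane class, and then invoke \cite{Hartshornebil} to conclude that $Y$ arises from $X$ by two linkages, each through a complete intersection of the form $C\cap F$ --- that is, precisely by bilinkage via $H_{d_1}\cap H_{d_2}$. To obtain the divisor relation I would use that both $X$ and $Y$ are subcanonical: the self-duality of the Pfaffian resolution (\ref{pfaff-resol}) gives $\omega_X=\mathcal{O}_X(2s+t-N-1)$ and $\omega_Y=\mathcal{O}_Y(2s'+t-N-1)$, where $s'=c_1(E\oplus L_1\oplus L_2)+(r+1)t=d_1+d_2-s-t$. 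Combining this with $\omega_C=\mathcal{O}_C(d_1+d_2-N-1)$ and the adjunction formula $\omega_V=\omega_C\otimes\mathcal{O}_C(V)|_V$ for a divisor $V\subset C$ yields $Y\simeq X+hH$ with $h=2(d_1+d_2-2s-t)$, twice the degree of the line bundle in which $\mathbf{l}$ lives; the even height is the expected signature of a two-step linkage.

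The main obstacle, beyond the converse of the equivalence, will be to promote the relation between $\mathcal{O}_C(X)|_X$ and $\mathcal{O}_C(Y)|_Y$ read off from the canonical bundles to an honest relation $Y\simeq X+hH$ in the group of generalized divisors on $C$, and to exhibit the intermediate linked scheme. I would handle this through the mapping cone of the Koszul complex of $C$ over the Pfaffian resolution of $X$, reading the two summands $L_1$ and $L_2$ as the two linkage steps; Assumption \ref{assumptions for biliaison} enters exactly here, since $H^1(E^{\ast}(l))=0$ guarantees that the residual schemes are again arithmetically Cohen--Macaulay with the predicted resolution, so that the cone computation closes up and the biliaison has the claimed height.
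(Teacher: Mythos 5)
The decisive gap is in the bilinkage claim, which you correctly identify as the heart of the statement but attack by a method that cannot work and in fact returns the wrong answer. You propose to read off the height $h$ in $Y\simeq X+hH$ from canonical bundles via adjunction. But adjunction only computes the restrictions $\mathcal{O}_C(X)|_X$ and $\mathcal{O}_C(Y)|_Y$, which are line bundles on two \emph{different} subschemes; comparing their twists does not determine the class of $Y-X$ in the group of generalized divisors of $C=H_{d_1}\cap H_{d_2}$ modulo linear equivalence (compare two lines from the two rulings of a smooth quadric surface: same canonical bundle, same self-intersection, not linearly equivalent). Concretely, your formula $h=2(d_1+d_2-2s-t)$ fails in the paper's own application: there $X=D_9$ has degree $9$, $C$ is the intersection of two cubics in $\mathbb{P}^5$ (so $d_1+d_2-2s-t=1$), and the bilinked surface has degree $18=9+1\cdot\deg C$, so the height is $1$, not $2$. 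Your remark that an even height is ``the expected signature of a two-step linkage'' conflates the parity of the number of links with the height of the biliaison: by \cite[Prop.~4.4]{Hartshornebil} an elementary biliaison of \emph{any} height is realized by two links. What is missing is the paper's key device: two sections $g_1=\mathbf{s}_1\wedge\mathbf{s}_2\wedge\varphi^{(r-1)}$ and $g_2$ (built from $\mathbf{l}$ and $\varphi^{(r)}$) of the \emph{same} line bundle ${\textstyle\bigwedge}^{2r+3}(E\oplus L_1\oplus L_2)((r+1)t)$, which one checks fiberwise are proportional on $C$ because $\mathbf{s}_i\wedge\varphi^{(r)}=0$ there. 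Their ratio $g$ is a rational function on $C$, and $g-1$ has divisor $Y-X-H$; this single explicit function both establishes the linear equivalence $Y\simeq X+H$ and fixes the height.

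The same function also carries the converse implication, which you leave at ``one checks the Pfaffian complex fails to be acyclic'' (not an argument, and acyclicity is not obviously equivalent to the codimension statement you need): since $g+1$ vanishes exactly along $Y$, if $Y$ has codimension $3$ then $C$ can have no component of codimension $1$, because the zero locus of a rational function on such a component would have codimension at most $2$. Your forward direction, by contrast, is a legitimate alternative to the paper's degeneration argument (the paper degenerates $(\varphi,\mathbf{s}_1,\lambda\mathbf{s}_2,\mathbf{l})$ at $\lambda=0$ and uses semicontinuity), but as written it still owes the verification that for general $\mathbf{l}$ some Pfaffian beyond the two cutting out $H_{d_1}$ and $H_{d_2}$ is nonzero on every component of $C$; this can be supplied because $X$ has codimension $3$ and therefore contains no component of the codimension-$2$ scheme $C$. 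Finally, the concluding mapping-cone paragraph is a plan rather than a proof, and it misplaces the role of Assumption~\ref{assumptions for biliaison}: that hypothesis is used \emph{before} the lemma, to show $H^0(E(d-s))\to H^0(\mathcal{I}_X(d))$ is surjective, i.e.\ to guarantee that the sections $\mathbf{s}_1,\mathbf{s}_2$ realizing $H_{d_1},H_{d_2}$ exist at all.
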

\begin{proof}
Assume that $T=H_{d_1}\cap H_{d_2}$ is a codimension 2 complete intersection.
Let us choose $\mathbf{l}\in H^0(\mathcal{O}_{\mathbb{P}^6}(d_1+d_2-2s-t))$ such
that $T\cap\{\mathbf{l}=0\}$ is of codimension 3.
We can now easily check that at $\lambda=0$ the degeneracy loci of the
sections
$(\varphi,\mathbf{s}_1,\lambda \mathbf{s}_2, \mathbf{l})$ degenerate to a subvariety of
$X\cup (Y\cap\{\mathbf{l}=0\})$, hence the
general element of the family has codimension 3 as expected.
By simple base change we
  find that the map associated to
$(\varphi,\mathbf{s}_1,\mathbf{s}_2,\frac{1}{\lambda}\mathbf{l})$ degenerates
along a codimension 3 Pfaffian variety for general $\lambda$.

  Conversely, assume that a section
$(\varphi,\mathbf{s}_1,\mathbf{s}_2,\mathbf{l})$ defines a codimension 3
variety.
  Consider two sections
$$g_1=\mathbf{s}_1\wedge \mathbf{s}_2 \wedge \underbrace{\varphi \wedge \dots \wedge
\varphi}_{r-1} \in H^0({\textstyle\bigwedge}^{2r+3}(E\oplus
\mathcal{O}_{\mathbb{P}^6}(d_1-s-t)\oplus
\mathcal{O}_{\mathbb{P}^6}(d_2-s-t))((r+1)t) )$$ and
$$g_2=\mathbf{l} \otimes(t \wedge \underbrace{\varphi \wedge \dots \wedge
\varphi}_{r})\in H^0({\textstyle\bigwedge}^{2r+3}(E\oplus
\mathcal{O}_{\mathbb{P}^6}(d_1-s-t)\oplus
\mathcal{O}_{\mathbb{P}^6}(d_2-s-t))((r+1)t) ).$$
It is a simple exercise in linear algebra to check on
  the fibers of this bundle that
these two sections are proportional on $H_{d_1}\cap H_{d_2}$,
 i.e.
 $\mathbf{s}_i\wedge \underbrace{\varphi \wedge \dots \wedge \varphi}_{r}$
  vanishes for
$i=1,2$.
The ratio between these two sections defines a rational function $g$ on
$H_{d_1}\cap H_{d_2}$. Observe that the function $g+1$ vanishes only along the
Pfaffian variety defined by
  $(\varphi,\mathbf{s}_1,\mathbf{s}_2,\mathbf{l})$,
which is of codimension 3 by assumption. It follows that $H_{d_1}\cap H_{d_2}$
is of codimension 2.

  To prove the last statement of the lemma, let $Y$ be
  the Pfaffian variety
defined by the section
   $(\varphi,\mathbf{s}_1,\mathbf{s}_2,\mathbf{l})$. In
particular, $Y$ is of codimension 3 and  $T=H_{d_1}\cap H_{d_2}$ is a
complete intersection of two hypersurfaces.
Now, $Y$ defines a generalized divisor in the sense of \cite{Hartshornebil}. We
claim that $Y$ is linearly equivalent as a generalized
  divisor to $X+H$, where
$H$ is the restriction of the hyperplane section
  to~$T$.
Indeed,
   $g-1$  is a rational function on
$H_{d_1}\cap H_{d_2}$
which defines $Y-X-H$. By
  the definition of biliaison, it follows that $Y$ is related
to $X$ by a biliaison of height 1.
Finally, by
  \cite[Prop. 4.4]{Hartshornebil}, this means that $Y$ is bilinked to
$X$.
\end{proof}
\section{Degree $9$ del Pezzo surface and degree $18$ canonically embedded
surfaces}\label{sec-canonical18}
The analogy discussed in \cite{CYTon1} suggests that one might try
to construct
a canonically embedded surface of general type of degree $18$ in $\mathbb{P}^5$
if
  one finds an
appropriate description of
a del Pezzo surface of degree 9 in $\mathbb{P}^5$. In this section we collect information
on such del Pezzo
  surfaces and next present a construction of canonically
embedded
surfaces of general type of degree $18$.
\subsection{Del Pezzo surface of degree 9}
Recall that a del Pezzo surface of degree 9 is just $\mathbb{P}^2$ and its
anticanonical embedding is the image $V_9$ of the triple Veronese embedding.
We shall denote by $D^{\Lambda}_9\subset \mathbb{P}^5=:\mathbb{P}(W)$ the surface obtained as the image of the
projection of the image of this embedding
from a general $3$-dimensional linear subspace $\Lambda\subset\mathbb{P}^9$.
Let $D=D^{\Lambda}_9$ for some general $3$-dimensional linear subspace $\Lambda\subset\mathbb{P}^9$. Our aim is to understand the module
  $M:=\bigoplus_{k=0}^{\infty}
H^1(\mathcal{I}_{D}(k+2))$, the shifted Hartshorne--Rao module of $D$. From \cite[Lem.~4.1]{CYTon1}, we know that the Hilbert function of the Hartshorne--Rao module of $D$ has values
  $(0,4,7,0, \dots )$ starting from grade $0$.
We can thus write $M=M_{-1}\oplus M_{0}=H^1(\mathcal{I}_D(1))\oplus
H^1(\mathcal{I}_{D}(2))$.

By working out a random example in Macaulay2, we
  can prove that $M$ is generated in
degree $-1$. Moreover, the minimal resolution of $M$
  is
$$
\vbox{%
\halign{&\hfil\,$#$\,\hfil\cr
0\leftarrow M\leftarrow 4R(1) \leftarrow 17R&&18 R(-1)&& 4R(-2)\ \cr
&\vbox to 10pt{\vskip 0pt\hbox{$\nwarrow$}\vss}&\oplus
&\longleftarrow & \oplus & \vbox to 10pt{\vskip 6pt\hbox{$\nwarrow$}\vss} \cr
&& 29 R(-2) && 80 R(-3)&&
81R(-4)\leftarrow 38 R(-5)\leftarrow 7 R(-6)\leftarrow 0 \cr
}}
$$
Trying to extend
  \cite[Thm.~1.3]{CYTon1} to the case of del Pezzo surfaces of degree 9, we should look for Pfaffian varieties associated to bundles in $\mathrm{Ext}^1(2\mathcal{O}_{\mathbb{P}^5}, Syz^1(M))$.
However, for modules corresponding to general del Pezzo surfaces $D^{\Lambda}_9$ it is hard to find any such bundle for which a Pfaffian variety would exist. In particular,  even the bundle $Syz^1(M)\oplus
2\mathcal{O}_{\mathbb{P}^5}$ has no twisted
skew self-map defining a Pfaffian variety.
For this reason, instead of trying to find a special element in $\mathrm{Ext}^1(2\mathcal{O}_{\mathbb{P}^5}, Syz^1(M))$ for $M$ being the shifted Hartshorne--Rao module of a general del Pezzo surface of degree 9 in $\mathbb{P}^5$,
we look for
  a special smooth projected del Pezzo surface of degree 9  for which the
Betti table of the resolution of its Hartshorne--Rao module has
different shape from the generic one.

\begin{prop}\label{prop special Lambda}
There exists a $3$-dimensional linear subspace $\Lambda \subset \mathbb{P}^9$ such that the projected del Pezzo
surface $D^{\Lambda}_9\subset \mathbb{P}^5$ is smooth and is contained in a
complete intersection $Y$ of
two smooth cubic hypersurfaces and such that the singular locus of $Y$ consists
of 60 isolated singularities.
\end{prop}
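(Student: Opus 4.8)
The plan is to translate the condition ``$D$ lies on two cubics'' into a rank condition on an explicit multiplication map, to produce a special $\Lambda$ satisfying it, and then to recover the number $60$ as a Chern-class degeneracy count. First I would reformulate the projection. Writing $U=H^0(\PP^2,\oo_{\PP^2}(3))$ and $\PP^9=\PP(U^{*})$, projecting $V_9$ from $\Lambda=\PP(K)$ with $K\subset U^{*}$ four-dimensional is the same as fixing the six-dimensional space $W=K^{\perp}\subset U$ of cubic forms defining the embedding $D=D^{\Lambda}_9\hookrightarrow\PP^5=\PP(W^{*})$. Cubics on $\PP^5$ through $D$ are then identified with the kernel of the multiplication map $\mu_W\colon \mathrm{Sym}^3 W\to H^0(\PP^2,\oo_{\PP^2}(9))$, a map between spaces of dimension $56$ and $55$. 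For general $W$ the map $\mu_W$ is surjective with one-dimensional kernel, recovering the single cubic of the general projection; the surface $D$ lies on two independent cubics exactly when $\mathrm{rank}\,\mu_W\le 54$, equivalently $\dim\ker\mu_W\ge 2$.

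Next I would establish existence. As $W$ ranges over $\mathrm{Gr}(6,10)$ (dimension $24$), the condition $\mathrm{rank}\,\mu_W\le 54$ cuts out a determinantal locus of expected codimension $(56-54)(55-54)=2$, so one expects a $22$-dimensional family of admissible centres. Since $D^{\Lambda}_9$ is smooth precisely when $\Lambda$ is disjoint from the secant variety of $V_9$, an open condition, it is enough to exhibit one $\Lambda$ on which both the rank condition and smoothness hold; the determinantal locus being nonempty is not automatic, so I would produce such a $\Lambda$ by an explicit choice of six cubics in Macaulay2, checking directly that $D$ is smooth and that $h^0(\mathcal{I}_D(3))=2$. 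Then $D$ lies on a pencil of cubics whose base locus is the threefold $Y$, which is the intersection of any two distinct members.

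The heart of the argument is the count of singularities. Since $D\cong\PP^2$ with $\oo_D(1)=\oo_{\PP^2}(3L)$, where $L$ is the line class, restricting the Euler sequence gives $c(N_{D/\PP^5})=(1+3L)^{6}(1+L)^{-3}=1+15L+87L^2$, so $c_1(N^{*}_D)=-15L$ and $c_2(N^{*}_D)=87L^2$. Each cubic $F_i$ through $D$ restricts, via $\mathcal{I}_D/\mathcal{I}_D^2=N^{*}_{D/\PP^5}$, to a section $\sigma_i\in H^0(D,N^{*}_D(3))$, and at a point $p\in D$ the threefold $Y=V(F_1,F_2)$ is singular if and only if $dF_1(p),dF_2(p)$ are dependent, i.e.\ the map $(\sigma_1,\sigma_2)\colon\oo_D^{\oplus 2}\to N^{*}_D(3)$ drops to rank $\le 1$. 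By the Thom--Porteous formula this degeneracy locus has expected class $c_2(N^{*}_D(3))$, and the twisting formula for a rank-$3$ bundle yields $c_2(N^{*}_D(3))=87L^2-270L^2+243L^2=60\,L^2$. Thus there are exactly $60$ singular points on $D$, counted with multiplicity, which is the source of the number in the statement.

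It remains to upgrade this numerical count to the geometric assertion, and here lie the main obstacles. On the explicit example I would verify that the $60$ points are reduced ordinary nodes, that $Y$ has no further singular points away from $D$, so that the Chern number is the honest number of isolated nodes, and that the pencil contains smooth members, two of which I take as $H_1,H_2$; choosing them avoids the discriminant, a proper closed subset of the $\PP^1$ of cubics. Semicontinuity then propagates smoothness of $D$, the equality $h^0(\mathcal{I}_D(3))=2$, and the nodal structure of $Y$ to the general member of the $22$-dimensional family, though for the existence claim the single explicit $\Lambda$ already suffices. The delicate point is exactly the transversality needed to know that the class $60\,L^2$ is represented by $60$ distinct reduced nodes, all lying on $D$ and with no excess contribution off $D$, which is why an explicit computation rather than a pure dimension count is required to close the argument.
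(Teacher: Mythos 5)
Your reduction of the two--cubics condition to the rank condition $\operatorname{rank}\mu_W\le 54$ on the $55\times 56$ multiplication matrix is exactly the paper's setup (there the matrix is called $L_N$ and is viewed over the $60$-dimensional space of $10\times 6$ projection matrices $N$), and your Porteous computation $c_2(N^{*}_{D}(3))=87-270+243=60$ correctly reproduces the count that the paper records only in the remark following the proposition; the paper instead verifies the $60$ singular points directly by machine. One caveat on the local analysis: the paper finds that the singularities of $Y$ are \emph{not} ordinary nodes but isolated triple points whose tangent cone is a cone over a projected cubic scroll, so the verification you propose (``reduced ordinary nodes'') would fail as stated --- though only reducedness of the degeneracy scheme, not the node condition, is needed for the statement. (Also, the paper's Remark 3.3 suggests the admissible locus is actually an open subset of a degree-$60$ hypersurface, i.e.\ of codimension $1$ rather than the expected $2$; this does not affect your argument but your ``$22$-dimensional family'' is likely off by one.)

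The genuine gap is in the existence step. You write that you would ``produce such a $\Lambda$ by an explicit choice of six cubics in Macaulay2,'' but a point of a positive-codimensional determinantal locus in a $24$- (or $60$-) dimensional parameter space cannot be found by a random rational choice, and in practice such a search only succeeds over a finite field. This is precisely where the bulk of the paper's proof lives: the explicit center $\Lambda_0$ is found probabilistically over $\mathbb{F}_{17}$, and the authors then lift it to characteristic $0$ by restricting $L$ to a line through $N_0$, computing the Smith normal form over $\mathbb{Q}[\lambda]$, extracting a degree-$150$ degeneracy polynomial which factors as $P_1P_2$ with $\deg P_1=60$, and taking a root of $P_1$ in the number field $\mathbb{Q}[\lambda]/(P_1)$ lying over the prime $(17,\lambda)$; they must then check that the pencil of cubics, smoothness of $D$, disjointness of $\Lambda$ from the secant variety, and the finiteness of $\operatorname{Sing}Y$ all specialize correctly to the $\mathbb{F}_{17}$ example. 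Without either an explicit characteristic-zero witness or such a lifting argument (via the valuative criterion of properness), your plan does not actually establish existence. The remaining steps of your outline --- Bertini for smooth members of the pencil, semicontinuity, and controlling $\operatorname{Sing}Y$ away from $D$ --- are sound and parallel the paper.
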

\begin{proof}
Let $V_9$ be the del Pezzo surface of degree 9 obtained as the image of the
following Veronese
  embedding:
$$\mathbb{P}^2\ni (x:y:z)\mapsto (x^3 : y^3 : z^3 : 3x^2y : 3xy^2 : 3x^2z :
3xz^2 : 3y^2z : 3yz^2 : 6xyz) \in \mathbb{P}^9.$$
Let us denote the corresponding coordinates in $\mathbb{P}^9$ by
  $(a_0, \dots, a_9)$.

Recall that the ideal of the del
Pezzo surface of degree $9$ in $\mathbb{P}^9$ is defined by the $2\times 2$
minors of the Catalecticant
  matrix
$$A=\left[\begin{array}{cccccc}
3a_0&a_4&a_6&2a_3&2a_5&a_9\\
a_3&3a_1&a_8&2a_4&a_9&2a_7\\
a_5&a_7&3a_2&a_9&2a_6&2a_8
 \end{array}\right].$$
It is also known that the $3\times 3$ minors of $A$ define the secant locus (i.e. the closure of the union of all secant lines) of
the del Pezzo surface. Moreover, the secant locus is
of codimension $4$ in $\mathbb{P}^9$.

By the probabilistic method, one can easily construct in positive characteristic
an example of a three-dimensional projective space $\Lambda_0$ disjoint from the
secant locus such that the projection is contained in a pencil
of cubics.
An example of a matrix defining a projection from such a $\Lambda_0$ in characteristic $17$ is
the following:
$$N_0=\left[\begin{array}{cccccc}
0  &  0 &  -1 &   -2 &0  & 0\\
        1  &  0 & -2  &  0 & 0 & 0  \\
        0   &  0  & -1  &  0  & 0 &-1 \\
        1 &  0 & 0  &   -1 & 0 & 0  \\
        1   &  1  & 2    & 0 & 0 & 0  \\
        0    & 0 & 0  &   0 &2 & 0 \\
        -1 &0 & 1   &  1  & 1 & 1  \\
        1 & 0 & 0  &   0 &  0 & 0  \\
        1  & 0 & 0   & 0 &1 & 0  \\
        -1  &  0  & 0  &   1 &  0 & 0
\end{array}\right].$$

We check using Macaulay2 that the projection of $V_9$ from $\Lambda_0$ is
contained in a complete intersection $Y$ of two smooth cubics $C_1$ and $C_2$
such that $Y$ has 60 distinct singular points.

We shall show that $\Lambda_0$ lifts to characteristic $0$ to some $\Lambda$
such that the projection of $V_9$ from $\Lambda$ is contained
  in two cubics
  that  specialize to $C_1$ and $C_2$.
First observe that a projection from $\Lambda$ corresponds to a linear map
$\mathbb{C}^{10}\to \mathbb{C}^6$, hence a $10 \times 6$ matrix $N$ with complex
entries.
This projection composed with the Veronese embedding is a map
$$\varphi:\mathbb{P}^2 \to \mathbb{P}^6$$
defined by a base point free linear system of cubics, described by $N$. If we
further compose $\varphi$ with the triple Veronese embedding $\psi:
\mathbb{P}^6\to \mathbb{P}^{55}$,
then the dimension of the space of cubics containing $\Pi_{\Lambda}(V_9)$ is
equal to the codimension of the span of the image
$\psi\circ\varphi(\mathbb{P}^2)$ in $\mathbb{P}^{55}$.
On the other hand, it is clear that $\psi\circ\varphi$ factors through a 9-tuple
Veronese embedding
   $\mathbb{P}^2\to\mathbb{P}^{54}$ and a linear map
$L_{N}:\mathbb{P}^{54}\to \mathbb{P}^{55}$.
It follows that the image of the projection $\Pi_{\Lambda}(V_9)$ is always
contained in a cubic hypersurface. Moreover, the projection is contained in a
  two-dimensional space if and only if
$L_N$ has non-maximal rank. Observe that, following the above description, $L_N$
can be written explicitly as a $55\times 56$ matrix depending on the entries of
$N$. If we now consider a
  60-dimensional vector space $V$ parametrizing
  matrices $N$, then we obtain a $55\times 56$ matrix $L$ with entries being cubic
polynomials on $V$.
Denote by $\Gamma$ the degeneracy locus of $L$. It is a subvariety of $V$ of
codimension $\leq 2$. We can now proceed to
  describe an explicit lifting to characteristic $0$ of the constructed
case over $\mathbb{F}_{17}$.

Consider any lift $N'_0$ of $N_0$ to $\mathbb{Z}$ and a random line $l$ in $V$
passing through $N_0$. More precisely, we choose $l$ by choosing a
parametrization $\mathcal{N}: \mathbb{C}\ni \lambda \mapsto N'_0+\lambda N_1\in l$ with
random $N_1$, for example
$$\mathcal{N}:=\left[\begin{array}{cccccc}
0  &   \lambda &  -2\lambda-1 &   -2 &0  & 0\\
        1  &   2\lambda & -2  &  -\lambda & 0 & 0  \\
        0   &  0  & -1  &  0  & 2\lambda &-1 \\
        \lambda+1 &  2\lambda & 0  &   -1 & 0 & 0  \\
        1   &  1  & 2    & -\lambda & 0 & 0  \\
        0    & -\lambda & 0  &   -2\lambda &2 & 2\lambda \\
        -2\lambda-1 &2\lambda & 1   &  1  & 1 & 1  \\
        2\lambda+1 & -\lambda & 0  &   0 &  0 & 0  \\
        \lambda+1  & -2\lambda &-\lambda   & -2\lambda &1 & 0  \\
        -1  &  0  & 0  &   1 &  0 & 0
\end{array}\right].$$

We can now easily compute in Macaulay2 the Smith normal form $L_{SNF}$ of the
matrix $L_\mathcal{N}$ restricted to the line $l$ (i.e. over
$\mathbb{Q}[\lambda]$). It is a matrix with
  entries  polynomial in
$\lambda$. More precisely, in our specific case,
 one entry is a polynomial $p$ of degree 150 with
 integer coefficients,
whereas the remaining diagonal entries are 1. By definition, $L_\mathcal{N}$
has non-maximal rank if and only $L_{SNF}$ has non-maximal rank. The degeneracy locus
of $L_{SNF}$ is clearly defined by the vanishing of $p$. We also check that the
reduction mod 17 of this polynomial is also of degree 150.
It follows by the Valuative Criterion for Properness that there exists a number
field $K$ and a prime $\mathfrak{p}$ in its ring of integers $O_K$ with
$O_K/\mathfrak{p}\cong \mathbb{F}_{17}$ such that
$N_0$ is the specialization of an $O_{K,\mathfrak{p}}$ valued point of $\Gamma$.
In our case,
this can be
  shown explicitly. Indeed, the polynomial of degree 150
decomposes into two irreducible (over $\mathbb{Q}[\lambda]$) polynomials $P_1$
and $P_2$ with integer coefficients and of degrees 60 and 90 respectively.
We check that $P_1$ reduced mod 17 has a root
   $\lambda=0$. We can hence
consider the number field $K=\mathbb{Z}[\lambda]/(P_1)$ and the prime ideal
generated by $(17,\lambda)$ in $O_K$.
It is clear that
  the projection $\Pi$ defined by the matrix $L_{\mathcal{N}}$ with $\lambda$
being any root of
   $P_1$ maps $V_9$ to a variety contained in two
cubics.

Observe now that the two cubics containing  $\Pi(V_9)$ are 
computed universally (with parameter $\lambda$)
  when computing the Smith normal form. More precisely, from the
algorithm, we obtain matrices $\mathcal{S}_1$, $\mathcal{S}_2$ invertible over
$\mathbb{Q}[\lambda]$ such that $\mathcal{S}_1 L_{\mathcal{N}}
\mathcal{S}_2=L_{SNF}$. In particular, the
columns of $\mathcal{S}_2$
corresponding to
  the vanishing columns of $L_{SNF}$ define cubics containing the
image $\Pi(V_9)$.
We check easily that the ideal generated by these two cubics specializes via our
specialization map to the ideal generated by the two
  cubics, computed over
$\mathbb{F}_{17}$.
Moreover, since the kernel of the projection over $\mathbb{F}_{17}$ is disjoint
from the secant locus, this is also the case for the lifted projection.
It follows that there exists a lift of the projection found over
$\mathbb{F}_{17}$ such that the image of $V_9$ is smooth and contained in a
complete intersection $Y$ of two smooth cubics and such that $Y$ has
singularities not worse than 60
isolated singular points.
\end{proof}

\begin{rem} Note that the computation of
$c_2(\mathcal{I}_{D_9}/\mathcal{I}_{D_9}^2(3))=60$ gives us the expected number
of singular points.
We indeed check using Macaulay 2 that the two cubics containing the projection
of $V_9$
  constructed above are smooth and
intersect in a variety having 60 isolated singularities.
However, the singularities of $Y$ are not nodes but isolated triple points with
tangent cone being
 the cone over the projection of the cubic scroll to $\mathbb{P}^3$.
\end{rem}

\begin{rem}\label{codim one locus of projections}
One can describe the set of matrices defining
  those projections of $V_9$ that are
contained in two cubics in terms of maximal minors
of $L$ in $V$, hence, a variety of expected codimension~2.
On the other hand, from the proof of Proposition \ref{prop special Lambda}, for a
random choice of line $l$, the degeneracy locus restricted
to this line
  has two components over $\mathbb{Q}[\lambda]$. One component of
degree 60
  passes through our
  lift, and the other of degree 90 corresponds to
the locus of
  $\Lambda$'s which intersect the secant locus of $V_9$
(it is a simple exercise to prove that for any such $\Lambda$ the projection is
indeed contained in two cubics). This suggests that the locus of projections
satisfying the assertion of Proposition  \ref{prop special Lambda}
is an open subset of a hypersurface of degree 60 in $\mathbb{P}(V)$.
Additional evidence follows from the Jacobi formula for the derivative of the
determinant. Indeed, using this formula we can compute the tangent space to the
variety $\Gamma$
  at any constructed
  point, even if writing down the equations of
$\Gamma$ is out of reach for our computer. In our case, we get a codimension 1
tangent space.
It is an interesting problem to find a geometric interpretation as in
\cite{HulekRanestad} for the centers
of projection contained in $\Gamma$.

Another interesting problem is the geometric description of the cubics
containing the projected variety. For instance,
for a generic choice of
the center of projection $\Lambda$, the surface
$D_9\subset \mathbb{P}^5$ is contained in a unique cubic singular along a
  non-degenerate curve of degree $6$. From \cite{Laza}
  such a cubic has to be
determinantal.
\end{rem}

\begin{rem}
  If the center of projection $\Lambda$ intersects the secant locus of the surface $V_9$
in one
  point, then $D^{\Lambda}_9$ is also contained in a pencil of cubics. However, in this case, we have
$h^1(\mathcal{I}_{D^{\Lambda}_9})=3$. Moreover, if $\Lambda$ meets the secant locus in three (resp.~four)
points then $D^{\Lambda}_9$ is contained in a
  four- (resp.~five-) dimensional space of cubics. And if the intersection is a
  line, then there is
  a seven-dimensional space of cubics in the ideal.
\end{rem}
\subsection{Surfaces of general type of degree $18$ in $\mathbb{P}^5$}
\label{sec degree 18}
Let us consider a special $\Lambda$ such that $D^{\Lambda}_9\subset
\mathbb{P}^5$ (see Proposition \ref{prop special Lambda}) is contained in a
complete intersection threefold of degree $9$ with 60 isolated singularities as
above.
\begin{thm}\label{gen type surfaces of deg 18}
The surface $D^{\Lambda}_9$ can be bilinked through the complete intersection of two cubics to a smooth
surface of general type $S_0$ of degree $18$.
\end{thm}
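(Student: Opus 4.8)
The plan is to realize $S_0$ as an elementary biliaison of $D:=D^\Lambda_9$ of height one on the complete intersection threefold $Y=C_1\cap C_2$, in the spirit of Lemma \ref{lem bilinkage 1} and Construction \ref{relation by bilinkage and deformation surf}. The structural fact that makes everything work is that $Y$ is a (singular) Calabi--Yau threefold: since $Y$ is cut out by two cubics in $\PP^5$, adjunction gives $\omega_Y=\oo_Y(-6+3+3)=\oo_Y$. I would choose a cubic hypersurface $G$ whose restriction to $Y$ contains $D$, so that $G|_Y=D+S_0$ as generalized divisors on $Y$ in the sense of \cite{Hartshornebil}, and take $S_0$ to be the residual; equivalently $S_0$ is a general member of $|3H-D|$ on $Y$. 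By \cite[Prop.~4.4]{Hartshornebil} this exhibits $S_0$ as bilinked to $D$ through the intersection of the two cubics, as required by the statement.

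Next I would record the numerical invariants, which are forced by the biliaison. Intersecting on $Y$ gives $\deg S_0=H^2\cdot(3H-D)=3\deg Y-\deg D=27-9=18$. For the canonical class I would use that $D\cong\PP^2$ is embedded by cubics, so $\omega_D=\oo_D(-1)$, together with adjunction on the Calabi--Yau $Y$: from $\omega_Y=\oo_Y$ one has $\oo_Y(D)|_D=\omega_D=\oo_D(-1)$, whence the liaison of $D$ and $S_0$ through $G|_Y\sim 3H$ yields $\oo_Y(S_0)|_D=\oo_D(3)\otimes\oo_Y(-D)|_D=\oo_D(4)$. Thus the linkage curve $C=D\cap S_0$ satisfies $C\sim 4H|_D$ on $D$ and, by the matching degree $C\cdot H=36=2\deg S_0$, the class $C\sim 2H|_{S_0}$ on $S_0$. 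Feeding this back into $\omega_{S_0}=\oo_Y(3H-D)|_{S_0}=\oo_{S_0}(3H-C)$ gives $\omega_{S_0}=\oo_{S_0}(1)$, so $S_0$ is subcanonical with $K_{S_0}=H|_{S_0}$ and $K_{S_0}^2=\deg S_0=18$. The on-the-nose identification of $\omega_{S_0}$ (not merely its degree) I would extract from the mapping cone of the liaison, or confirm directly on the explicit model of Proposition \ref{prop special Lambda}. The invariants $p_g=h^0(\omega_{S_0})=6$ and $q=0$ then follow by transporting the Hartshorne--Rao module of $D$ (whose resolution is displayed above) across the height-one biliaison, which only shifts it, and chasing $0\to\mathcal{I}_{S_0}\to\oo_{\PP^5}\to\oo_{S_0}\to 0$; this also recovers Theorem \ref{0}.

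The main obstacle is the smoothness of $S_0$. A plain Bertini argument does not suffice, because $Y$ carries the $60$ isolated triple points of Proposition \ref{prop special Lambda}, and $D$ and $S_0$ meet precisely along the curve $C$, which passes through this singular locus; Bertini only gives smoothness of a general $S_0$ away from $\sing(Y)$ and away from the base locus of $|3H-D|$. The delicate part is therefore the local analysis at each of the $60$ points: I would show that there the generalized divisor $S_0$ selects one sheet of the triple point (whose tangent cone is the cone over the projected cubic scroll) and is smooth, for instance by resolving $Y$ at such a point and checking that the strict transform of $S_0$ is smooth and meets the exceptional locus transversally. This is also exactly what validates the use of honest adjunction in the previous paragraph.

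In practice I would anchor the whole argument on the explicit model of Proposition \ref{prop special Lambda}: over $\mathbb{F}_{17}$, and on its characteristic-zero lift, one has explicit cubics, so smoothness of $S_0$ --- including at the $60$ points --- is a direct Jacobian-rank computation in Macaulay2, after which semicontinuity of the singular locus in the flat biliaison family of Construction \ref{relation by bilinkage and deformation surf} propagates smoothness to the general member and lifts it to characteristic zero. Finally, since $K_{S_0}=H|_{S_0}$ is the restriction of an ample class and $K_{S_0}^2=18>0$, the surface $S_0$ is minimal of general type, completing the construction $\PP^5\supset D^\Lambda_9\rightrightarrows S_0'\rightsquigarrow S_0$ and hence the theorem.
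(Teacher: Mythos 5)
Your construction of $S_0$ cannot be carried out. You propose to pick a cubic $G$ with $G|_Y\supset D$ and take $S_0$ to be the residual, i.e.\ a general member of $|3H-D|$ on $Y$. But for the special projection of Proposition \ref{prop special Lambda} the cubics containing $D=D^{\Lambda}_9$ form \emph{exactly} the pencil $\langle C_1,C_2\rangle$ cutting out $Y$ (the entire point of that proposition is to arrange $h^0(\mathcal{I}_D(3))=2$ rather than the generic value $1$; since $\Lambda$ is disjoint from the secant locus there are no further cubics). Hence $H^0(\mathcal{I}_{D/Y}(3))=H^0(\mathcal{I}_D(3))/H^0(\mathcal{I}_Y(3))=0$, the system $|3H-D|$ on $Y$ is empty, and no cubic restricts to $Y$ as a divisor containing $D$; equivalently, a direct link of $D$ through a $(3,3,3)$ complete intersection does not exist. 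Even setting this aside, a single link is a liaison, not a bilinkage, and $3H-D$ does not lie in the biliaison class $D+\mathbb{Z}H$: that would force $2D\sim 2H$, whereas $D\not\sim H$ on $Y$ because $D$ has a nontrivial Hartshorne--Rao module while hyperplane sections of the ACM threefold $Y$ do not. So \cite[Prop.~4.4]{Hartshornebil} would not apply to your $S_0$ in any case, and the theorem as stated (``bilinked'') would not follow.

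What is actually done is the \emph{ascending} elementary biliaison of height one, $S_0\sim D+H$ (also of degree $9+9=18$, but a different divisor class from your $3H-D$), realized by two successive links: first $D$ is linked to a surface $F_k$ through a complete intersection of type $(3,3,k)$, and then the sequence $0\to\mathcal{I}_{D\cup F_k}\to\mathcal{I}_{F_k}\to\omega_{D}(-k)\to 0$ together with $\omega_D=\oo_D(-1)$ gives $h^0(\mathcal{I}_{F_k}(k+1))=h^0(\mathcal{I}_{D\cup F_k}(k+1))+1$, so $F_k$ can be linked back through a $(3,3,k+1)$ to $S_0$. This also changes the geometry of $D\cap S_0$: it is not a curve of class $4H|_D$ requiring a delicate local analysis along $\mathrm{Sing}(Y)$, but precisely the $60$ singular points of $Y$, where $D$ and $S_0$ meet transversally; since both lie on a smooth cubic, $S_0$ is automatically smooth there, and Bertini applied to the base-point-free system $|D+H|$ on $Y\setminus\mathrm{Sing}(Y)$ handles the rest. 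Your adjunction computation $K_{S_0}=H|_{S_0}$ and the numerology $K^2=18$, $p_g=6$, $q=0$ come out the same for $D+H$ (there $D|_{S_0}$ is supported in codimension $2$, so it drops out), but as written they rest on a surface your recipe does not produce.
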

\begin{proof} Let us denote by $H$ the class of the hyperplane on
$\mathbb{P}^5$.
Denote by $F_k$ the
surface residual to $D_9$ through a complete intersection of type
$(3,3,k)$ defined as the intersection of the cubics containing $D^{\Lambda}_9$ with a general hypersurface of degree $k$ for some $k\geq 4$.
  From the exact sequence $$0\to\mathcal{I}_{D_9\cup F_k}\to
\mathcal{I}_{F_k}\to \omega_{D_9}(-k)\to 0,$$ using the fact that
  $D_9\cup F_k$ is
  ACM and $\omega_{D_9}=\mathcal{O}_{D_9}(-1)$, we infer
  that $h^0(\mathcal{I}_{D_9\cup F_k}(k+1))+1=h^0(\mathcal{I}_{ F_k}(k+1))$. The
surface $F_k$ is thus linked through two cubics and a surface of degree $k+1$ to
a surface $S_0$.
Let $Y$ be the complete intersection of our cubics.
Using Macaulay 2 in characteristic $17$, we proved that the singular locus of $Y$
is a smooth
  zero-dimensional scheme of degree $60$. Moreover, in characteristic  17 this scheme is
also the intersection scheme of $D_9$ and $S_0$.
It follows that also in characteristic $0$ the surfaces $D_9$ and $S_0$
intersect in isolated points in a transversal
  way, i.e.~their tangent spaces
  intersect transversely at each intersection point.
Since $S_0$ and $D_9$ are contained in a smooth
  cubic, it follows that $S_0$ is
smooth at each intersection point.
To prove that $S_0$ is smooth everywhere, we use
  the Bertini theorem outside the
singular locus of $Y$. Indeed, observe first that by
  \cite[Prop.
4.4]{Hartshornebil} the variety $S_0$ is an almost Cartier generalized divisor
linearly equivalent to
the almost Cartier generalized divisor $D_9+H$. It follows that on $Y\setminus
\mathrm{Sing} (Y)$ the surface $S_0$ is a general element of the
 system
$|D_9+H|$ which is base point free on $Y\setminus \mathrm{Sing} (Y)$
because it is
  so outside $D_0$ and because $S_0$ does not meet
$D_9$ in
  $Y\setminus \textup{Sing} (Y)$.
Furthermore, by
  the adjunction formula, since $S_0$ is a smooth divisor, Cartier in
codimension 2, we have $K_{S_0}=(K_Y+D+H)|_{S_0}=H_{S_0}$. This means that $S_0$
is a surface of general
type, canonically embedded in $\mathbb{P}^5$.
It is clearly linearly
  normal, since by
  the basic properties of liaison its
Hartshorne--Rao module is the Hartshorne--Rao module of the del Pezzo surface with
gradation lifted by 1. Finally, the degree of $S_0$ is $18$ by construction.
\end{proof}
\begin{rem}
 Observe that if $E$ is the bundle defining the del Pezzo surface $D_9$ through
the Pfaffian construction, then $S_0$ is defined by the bundle $E\oplus
2\mathcal{O}_{\mathbb{P}^5}$. Indeed, let us first point out that since we are
dealing with almost Cartier divisors,
we can perform all computations on $Y\setminus \mathrm{Sing} (Y)$ and then
extend the result to $Y$. In particular, by Lemma \ref{lem bilinkage 1} we have
  a 6-dimensional
subsystem of the linear system
  $|D_9+H|$, consisting of varieties obtained as
sections of $E\oplus 2\mathcal{O}_{\mathbb{P}^5}$. To prove that this gives the
complete linear system
  $|D_9+H|$, we make a simple dimension count basing on the
exact
  sequence
$$0\to \mathcal{O}_Y(H) \to  \mathcal{O}_Y(D+H) \to \mathcal{O}_D(D+H) \to 0,$$
the fact that the singularities of $Y$ are normal of codimension 3, and the
equalities  $\mathcal{O}_D(D+H)=\mathcal{O}_D$
   and $h^1(Y,\mathcal{O}_Y(H) )=0$. We
obtain
 $h^0(Y,\mathcal{O}_Y(D+H))=h^0(Y, \mathcal{O}_Y(H))+h^0(D,
\mathcal{O}_D)=7$, hence
the system $|D_9+H|$ is of dimension 6.
\end{rem}

\begin{rem}
Observe that the dimension of the family of surfaces obtained in Theorem
\ref{gen type surfaces of deg 18} is
at most $20$. Indeed, since we know that the general choice of
  a center of
projection does not lead to a variety
contained in the complete intersection of two cubics, it follows that the
dimension of the space of $\Lambda$'s
up to linear automorphisms is at most $\dim G(4,10)-1-8=15$.
We also find that for a given $D_9\subset \mathbb{P}^5$ contained in the
intersection of the cubics there is a
$5$-dimensional family ($=h^0(D+H)-1$) of bilinked surfaces. Altogether this gives
a space of dimension at most $20$.
Moreover, by Remark \ref{codim one locus of projections} and the Betti table of
  the
constructed surface,
we expect this dimension to be exactly $20$.
\end{rem}

We know that the dimension of the Kuranishi space $\mathcal{K}$ of a surface of
general type $S$ is not smaller than $h^1(T_S)-h^2(T_S)$.
On the other hand,
  by Noether's formula we find $c_2(S)=66$. So, by
  the Riemann--Roch
theorem
applied to $T_S$, we infer that $h^1(T_S)-h^2(T_S)= 36+h^0(T_S)\geq 36$.
Since $h^1(T_{\mathbb{P}^5}|S)=0$,
  we also have
$H^0(N_{S|\mathbb{P}^5})\twoheadrightarrow H^1(T_S)$. Thus there should
be an at least $36$-dimensional family of canonically embedded surfaces of
general type of degree
$18$ in $\mathbb{P}^5$ with special element $S_0$. The general element of this
family should have a simpler Hartshorne--Rao module. It is a natural problem to
construct
  such a general Hartshorne--Rao module.

\subsection{Bilinkages of surfaces of general type} Let us now study how
  Construction \ref{relation by bilinkage and deformation surf} works in the case
of canonical surfaces in $\PP^5$ constructed in \cite{Catanese}.
\begin{prop}\label{lem can surfaces are bilinked to del pezzo}
Any smooth linearly normal canonical surface $S\subset \mathbb{P}^5$ of
degree $d_S\leq 17$ satisfying the maximal rank assumption is obtained by
Construction
\ref{relation by bilinkage and deformation surf} from a del Pezzo surface $D$ of
degree $d_D=d_S-9$ with bilinkage performed
in a complete intersection of two cubics.
\end{prop}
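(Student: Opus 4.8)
The plan is to reverse the bilinkage of Theorem \ref{gen type surfaces of deg 18}: starting from $S$, I produce a del Pezzo surface $D$ of degree $d_S-9$ by a biliaison of height $-1$ performed on a complete intersection of two cubics, and then observe that running this biliaison forward recovers $S$ in the sense of Construction \ref{relation by bilinkage and deformation surf}. The first task is to show that $S$ lies on a complete intersection of two cubics. Since $S$ is canonically embedded, $\omega_S=\mathcal{O}_S(1)$ and $p_g=6$; Riemann--Roch together with Kodaira vanishing gives $h^0(\mathcal{O}_S(3))=\chi(\mathcal{O}_S)+3d_S$ and $h^1(\mathcal{O}_S(3))=h^2(\mathcal{O}_S(3))=0$. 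Feeding this into the ideal sequence $0\to\mathcal{I}_S(3)\to\mathcal{O}_{\mathbb{P}^5}(3)\to\mathcal{O}_S(3)\to 0$ and invoking the maximal rank assumption to control the restriction map $H^0(\mathcal{O}_{\mathbb{P}^5}(3))\to H^0(\mathcal{O}_S(3))$, I can compute $h^0(\mathcal{I}_S(3))$ and check that it is at least $2$ throughout the range $d_S\le 17$. It is precisely the maximal rank hypothesis that pins down $h^0(\mathcal{I}_S(3))$ here; this is the step that is tightest for the top degrees, where the Hartshorne--Rao module contributes and one may be forced to pass to a special member of the family of $S$ (this is where the degeneration $\rightsquigarrow$ of Construction \ref{relation by bilinkage and deformation surf} enters).

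Next, choosing two general cubics through $S$, I set $Y$ to be their complete intersection. Then $Y$ is a threefold of degree $9$ with $\omega_Y=\mathcal{O}_Y$ (as $3+3-6=0$), smooth away from finitely many points, and $S$ is an almost Cartier generalized divisor on $Y$ in the sense of \cite{Hartshornebil}. I then define $D$ to be a generalized divisor on $Y$ linearly equivalent to $S-H$, where $H$ is the hyperplane class; by \cite[Prop.~4.4]{Hartshornebil} this is a biliaison of height $-1$, so $D$ is bilinked to $S$. Computing on $Y$ gives $\deg D=(S-H)\cdot H^2=d_S-H^3=d_S-9$, and adjunction gives $\omega_D=(\omega_Y\otimes\mathcal{O}_Y(D))|_D=\mathcal{O}_Y(S-H)|_D$. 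Since $S\cap D$ is contained in the finite singular locus of $Y$, the two divisors meet in no codimension-one locus on the smooth part, so $\mathcal{O}_Y(S)|_D$ is trivial there and $\omega_D=\mathcal{O}_D(-H)$; hence $-K_D=H|_D$ is ample and $D$ is a del Pezzo surface of degree $d_S-9$. Smoothness of $D$ follows from Bertini away from $\mathrm{Sing}(Y)$ and from the transversality argument at the points of $S\cap D$ already used in the proof of Theorem \ref{gen type surfaces of deg 18}.

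Finally, running the biliaison forward, the relation $D+H\simeq S$ exhibits $S$ (or, when the Hartshorne--Rao module of $S$ is strictly smaller than that of the directly bilinked surface, a special member $S'$ with $S'\rightsquigarrow S$) as obtained from the del Pezzo surface $D$ by the bilinkage of Construction \ref{relation by bilinkage and deformation surf} through the complete intersection of the two cubics; the identification with the surfaces of \cite{Catanese} is then a matter of matching the numerical invariants degree by degree. The main obstacle I anticipate lies in the first step combined with the control of $Y$: guaranteeing $h^0(\mathcal{I}_S(3))\ge 2$ for the largest admissible degrees, where the count is tight and genuinely uses the maximal rank assumption (and possibly a degeneration), and making the adjunction computation rigorous on the singular threefold $Y$ through the almost Cartier generalized divisor formalism, in particular verifying that $S$ and $D$ meet only along $\mathrm{Sing}(Y)$.
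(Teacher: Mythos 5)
Your plan runs the biliaison backwards, from $S$ down to $D$, and its very first step --- that a canonical surface of degree $d_S\le 17$ satisfying maximal rank lies on a complete intersection of two cubics --- is false precisely in the top degrees. With $p_g=6$, $q=0$ one has $h^0(\mathcal{O}_S(3))=h^0(3K_S)=\chi(\mathcal{O}_S)+3K_S^2=7+3d_S$, while $h^0(\mathcal{O}_{\mathbb{P}^5}(3))=56$. Under the maximal rank assumption this gives $h^0(\mathcal{I}_S(3))=\max(0,\,49-3d_S)$, i.e.\ $4$ cubics for $d_S=15$, exactly \emph{one} cubic for $d_S=16$, and \emph{no} cubic for $d_S=17$. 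So for $d_S=16,17$ there is no complete intersection of two cubics through $S$ on which to perform your height $-1$ biliaison. You half-acknowledge this by saying one "may be forced to pass to a special member of the family," but that is exactly the point where the argument becomes circular: you would need to know that the closure of the family of maximal-rank canonical surfaces contains a member lying on two cubics, and establishing the existence of such a degeneration is the actual content of the proposition. The paper resolves this by arguing in the forward direction: it constructs the special surface $\tilde S_d$ as the Pfaffian variety of the bundle $E_d\oplus 2\mathcal{O}$ bilinked to the del Pezzo surface via Lemma \ref{lem bilinkage 1}, and then proves it is a degeneration of the general family by degenerating Catanese's bundle $F_d$ (the kernel of a generic map $13\mathcal{O}\to 2\mathcal{O}$, resp.\ $16\mathcal{O}\to 3\mathcal{O}$) to $E_d\oplus 2\mathcal{O}$, checking that sections of $\bigwedge^2 F_d(1)$ specialize, and invoking \cite[Lem.~3.4]{CYTon1}; deformation equivalence of all maximal-rank canonical surfaces of a given degree then finishes the argument.

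Even in the range $d_S\le 15$, where the cubic count works out, your descent leaves gaps the paper's forward argument avoids: you need $|S-H|$ to be effective and to contain a reduced irreducible member (this is a statement about $h^0(\mathcal{O}_Y(S-H))$ and the base locus, not granted by Bertini alone); you assert without proof that $S$ and the residual $D$ meet only inside $\mathrm{Sing}(Y)$; and you must still identify the resulting anticanonically embedded surface $D$ with a \emph{projected} del Pezzo surface of the type appearing in \cite{CYTon1}. The paper sidesteps all of this by working with the explicit Pfaffian descriptions of both $D_d$ (bundle $E_d$) and $S_d$ (bundle $F_d$) and matching them via $F_d=E_d\oplus 2\mathcal{O}$, so that the bilinked surface is manifestly in the right family. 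I would recommend reorganizing your argument in the forward direction and isolating the degeneration of vector bundles for $d_S=16,17$ as the key step.
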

\begin{proof}
We compare the description, by \cite{Catanese}, of a canonical surface $S_{d}$ of degree
$12\leq d+9\leq 17$ in $\mathbb{P}^5$ satisfying the maximal rank assumption, with the description of a projected del Pezzo surface $D_{d}$ of degree $d$ contained in
\cite{CYTon1}. Since all surfaces $S_d$ satisfying the assumptions above are deformation equivalent, it is enough to obtain one such surface in each degree using Construction \ref{relation by bilinkage and deformation surf}
as in the assertion. We observe that for $d\leq 6$ the bundle $F_{d}$ constructed by Catanese is related to the corresponding bundle $E_{d}$ from \cite{CYTon1}
by $E_{d}\oplus 2\mathcal{O}_{\mathbb{P}^6}=F_{d}$.
Since for $d\leq 7$ the
del Pezzo surface $D_{d}$ is contained in a complete intersection of two cubic
hypersurfaces, by Lemma \ref{lem bilinkage 1}
the del Pezzo surface $D_{d}$ is bilinked to a Gorenstein surface of general type
defined by the bundle $E_{d}\oplus 2\mathcal{O}_{\mathbb{P}^6}$ through the
Pfaffian construction. Let us denote the general such surface by $\tilde{S_{d}}$.
Since, for $d\leq 6$, we have $E_{d}\oplus 2\mathcal{O}_{\mathbb{P}^6}=F_d$, it follows that
$\tilde{S}_{d}$ is a smooth canonical surface in $\mathbb{P}^5$ of degree $d+9$.

For $d=7$,
  both $E_7\oplus 2\mathcal{O}_{\mathbb{P}^5}$ and $F_7$  appear
as kernels of some surjective maps $13\mathcal{O}_{\mathbb{P}^5}\to
2\mathcal{O}_{\mathbb{P}^5}.$
Moreover $F_7$ is the kernel of a generic such map. It is now enough to take a
one-parameter family parametrized by $\lambda \in \mathbb{C}$ of maps as above such
that for $\lambda \neq 0$ the kernel is isomorphic to
  $F_7$, whereas for $\lambda =0 $ the kernel is $E_7\oplus
2\mathcal{O}_{\mathbb{P}^5}$.
It follows that there is a bundle $\mathfrak{E}$ on
$\mathbb{P}^6\times \mathbb{C}$ whose restriction to $\mathbb{P}^6\times \{0\}$
is the bundle $\mathcal{F}\oplus 2\mathcal{O}_{\mathbb{P}^6}$ and the
restriction to
a fiber $\mathbb{P}^6\times \{\lambda \}$ for $\lambda\neq 0$
 is isomorphic to $E$. Moreover, since  $h^0(\bigwedge^2((E_7\oplus
2\mathcal{O}_{P}^5)(1))=h^0(\bigwedge^2 F_7(1))$,
 we infer that each section of
$\bigwedge^2(E_7\oplus2\mathcal{O}_{\mathbb{P}^6}) (1)$ is
extendable to a section of $\bigwedge^2\mathfrak{E}(1)$.
 It follows
  from \cite[Lem.~3.4]{CYTon1} that $\tilde{S}_7$ is a degeneration
of a family of canonical surfaces of degree 16 satisfying the maximal rank
  assumption; since all such surfaces are deformation
  equivalent, the assertion
follows in the case $d=7$.

For $d=8$ the situation is similar. More precisely, let $D_8$ be a del Pezzo
surface of degree 8 (either of type  $D_8^1$ or $D_8^2$) defined by some bundle
$E_8$. Then $D$ is contained in a variety
  that is the complete intersection of two
  cubics, and
$E_8\oplus 2\mathcal{O}_{\mathbb{P}^5}$ is the kernel of some special surjective
map
$16 \mathcal{O}_{\mathbb{P}^5}\to 3\mathcal{O}_{\mathbb{P}^5}.$
The bundle $F_8$
  which is the kernel of a generic such map defines by \cite{Catanese} a
canonical surface of general type satisfying the maximal rank assumption.
We then construct $\mathfrak{E}$ in the same way as above and
  conclude the proof by applying \cite[Lem.~3.4]{CYTon1} and the equality  $h^0(\bigwedge^2((E_8\oplus
2\mathcal{O}_{P}^5)(1))=h^0(\bigwedge^2 F_8(1))$.
\end{proof}
\section{Del Pezzo threefolds and Tonoli Calabi--Yau threefolds}\label{sec-Ton}
In order to obtain a Calabi--Yau threefold by
  Construction \ref{relation by
bilinkage and deformation surf}, we consider del Pezzo threefolds
  $T$, i.e.~$K_{T}=-2H$, where $H$ is ample embedded
in $\mathbb{P}^6$ by a subsystem of the half-anticanonical class. The first
examples of such threefolds we consider are the del Pezzo threefolds
of degree $3\leq d\leq 5$ embedded by the complete linear system of the half-anticanonical class into a linear subspace of $\PP^6$.
Note that all the del Pezzo threefolds are enumerated in \cite[Thm.~3.3.1]{Isk}.

Recall that, for a threefold in $Y\subset \mathbb{P}^n$ with $n\geq 7$ which is
not contained in a hyperplane,
there exists a smooth projection of $Y$ into $\mathbb{P}^6$ if and only if $Y$
is
  defective, i.e.  the dimension of the secant locus of $Y\subset \PP^n$ is $\leq 6$.
It follows that if we want to consider only smooth threefolds
  $F\subset \mathbb{P}^6$ in the case $d\geq 6$, we are restricted to the consideration of
del Pezzo threefolds
defective in their half-anticanonical embedding. There are only a few examples
of such. Let $V_t\subset \PP^{t+1}$ with
$t = 7, 8$
be the image of $\PP^3$ by the map defined by all
quadrics passing through $8-t$ point in $\PP^3$.
\begin{lemm}\label{FanoD}
 A smooth del Pezzo threefold of degree $d\geq 6$ embedded in $\PP^6$
by a subsystem of the half-anticanonical embedding is defective if and only if it is isomorphic to one of the following:
 \begin{enumerate}
 \item  $T_6\subset \PP^6$ is the generic central projection of the hyperplane section of the Segre embedding of
$\mathbb{P}^2\times \mathbb{P}^2\subset \PP^8$;
 \item $T_7\subset \PP^6$ is the projection of $V_7\subset \PP^8$
 from a linear
 space disjoint from the secant locus $Sec(V_7)\subset \PP^8$;
 \item $T_8\subset \PP^6$ is the  projection of the image $V_8$ of the double
Veronese embedding of $\mathbb{P}^3$ into $ \mathbb{P}^9$ from a linear
 space disjoint from the secant locus $Sec(V_8)\subset \PP^9$;
 \end{enumerate}
\end{lemm}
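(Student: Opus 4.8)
The plan is to turn the statement into a finite check governed by the projection criterion recalled just before it. First I would invoke the classification of del Pezzo threefolds \cite[Thm.~3.3.1]{Isk}. Since the half-anticanonical system of a del Pezzo threefold of degree $d$ satisfies $h^0(H)=d+2$, such a threefold is embedded in $\PP^{d+1}$, and since the degree of a del Pezzo threefold is at most $8$, the range $d\geq 6$ leaves only $d\in\{6,7,8\}$. The complete list of smooth del Pezzo threefolds in these degrees is: in degree $8$, $V_8=\nu_2(\PP^3)\subset\PP^9$; in degree $7$, $V_7\subset\PP^8$; and in degree $6$, exactly two threefolds in $\PP^7$, the Segre $\PP^1\times\PP^1\times\PP^1$ and the flag variety $\PP(T_{\PP^2})$, the latter being a hyperplane section of the Segre embedding of $\PP^2\times\PP^2\subset\PP^8$. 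By the criterion, $Y\subset\PP^{d+1}$ admits a smooth projection to $\PP^6$ if and only if it is defective, i.e.\ $\dim\mathrm{Sec}(Y)\leq 6$ (the expected secant dimension of a threefold being $7$); so everything reduces to computing $\dim\mathrm{Sec}(Y)$ in these four cases.

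For the three cases that should turn out defective I would read off the secant variety from a determinantal model. Identifying $\PP^9$ with the space of symmetric $4\times 4$ matrices, $V_8$ is the rank-one locus and $\mathrm{Sec}(V_8)$ is the rank-$\leq 2$ locus, of dimension $6$; hence $V_8$ is defective. For $V_7$ I would use that it is the projection of $V_8$ from a point $q_0=\nu_2(p_0)$ lying on $V_8$: every secant line of $V_7$ is the image of a plane $\langle a,b,q_0\rangle$, so $\mathrm{Sec}(V_7)$ is the image under $\pi_{q_0}$ of the cone over $\mathrm{Sec}(V_8)$ with vertex $q_0$; projecting a cone from its vertex lowers the dimension, whence $\dim\mathrm{Sec}(V_7)\leq 6$ and $V_7$ is defective. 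Finally, identifying $\PP^7$ with the traceless $3\times 3$ matrices, $\PP(T_{\PP^2})$ is the locus of traceless rank-one matrices; a sum of two such is traceless of rank $\leq 2$, and conversely a general traceless rank-$2$ matrix is such a sum, the obstruction being the traceless $2\times 2$ ``Gram'' matrix of a decomposition $M=UV^{T}$, which can be made to have zero diagonal by a $GL_2$-change of the decomposition since a generic traceless $2\times 2$ matrix is conjugate to one with zero diagonal. Thus $\mathrm{Sec}(\PP(T_{\PP^2}))$ is the traceless rank-$\leq 2$ locus, again of dimension $6$, and $\PP(T_{\PP^2})$ is defective.

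The decisive and, I expect, hardest case is $\PP^1\times\PP^1\times\PP^1\subset\PP^7$, which carries the whole ``only if'' direction: it must be shown \emph{not} defective. Identifying $\PP^7$ with $\PP(\mathbb{C}^2\otimes\mathbb{C}^2\otimes\mathbb{C}^2)$, the Segre is the locus of rank-one tensors and $\mathrm{Sec}$ is the closure of the rank-$\leq 2$ tensors; the point is the classical fact that a general $2\times 2\times 2$ tensor has rank $2$, so that $\mathrm{Sec}$ fills the ambient $\PP^7$ and $\dim\mathrm{Sec}=7$. Unlike the previous three cases this does not follow from a determinantal hypersurface description, so I would establish it by Terracini's lemma, checking that the affine tangent spaces to the Segre at two general points span the entire $8$-dimensional $\mathbb{C}^2\otimes\mathbb{C}^2\otimes\mathbb{C}^2$ (equivalently, citing the generic-rank statement for such tensors). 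This is the one place where the naive count must be carried out honestly rather than read off a determinantal locus.

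Assembling the four computations, exactly $V_8$, $V_7$ and $\PP(T_{\PP^2})$ are defective while $\PP^1\times\PP^1\times\PP^1$ is not. By the projection criterion these three are precisely the del Pezzo threefolds of degree $\geq 6$ admitting a smooth projection to $\PP^6$, and their generic central projections are by definition the threefolds $T_8$, $T_7$ and $T_6$ of the statement; this gives the asserted equivalence.
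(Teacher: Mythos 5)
Your proof is correct, but it takes a genuinely different route from the paper. The paper disposes of the lemma in one sentence: it invokes Scorza's classification of defective threefolds and intersects that list with Iskovskikh's classification of del Pezzo threefolds. You instead make the statement self-contained: after reducing to the four candidates ($V_8=\nu_2(\PP^3)\subset\PP^9$, $V_7\subset\PP^8$, and the two degree-$6$ threefolds $\PP(T_{\PP^2})$ and $\PP^1\times\PP^1\times\PP^1$ in $\PP^7$), you compute each secant variety directly from a determinantal or tensor model and apply the projection criterion stated just before the lemma. All four computations check out: the symmetric rank-$\leq 2$ locus in $\PP^9$ has codimension $3$; your cone-and-projection argument correctly bounds $\dim\mathrm{Sec}(V_7)\leq\dim\mathrm{Sec}(V_8)+1-1=6$; for the flag variety the inclusion $\mathrm{Sec}(X\cap H)\subseteq\mathrm{Sec}(X)\cap H$ already gives the needed bound $\leq 6$, so your more elaborate verification that a general traceless rank-$2$ matrix splits into two traceless rank-$1$ summands, while correct, is not strictly necessary; and you rightly identify the non-defectivity of $\PP^1\times\PP^1\times\PP^1$ (generic rank $2$ of a $2\times2\times2$ tensor, or Terracini) as the substantive ``only if'' content, which the paper's citation-based proof leaves implicit. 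The trade-off is the usual one: the paper's argument is shorter but outsources everything to two classification theorems, whereas yours is longer but elementary and verifiable case by case.
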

\begin{proof}
   This follows by comparing the classical classification of defective threefolds due to Scorza (see \cite{Scorza} for a modern approach) and the classification of del Pezzo threefolds due to Iskovskikh
 (see \cite{IskovskikhFano}).
\end{proof}

We aim at proving the following.
\begin{prop}\label{prop bilinkage dP 7 and CY 16}
For $6\leq d\leq 7$ every smooth del Pezzo threefold $T_d$ of degree $d$ in
$\mathbb{P}^6$ is related by
  Construction \ref{relation by bilinkage and
deformation surf} to a smooth Calabi--Yau threefold of degree $d+9$ in
$\mathbb{P}^6$.
\end{prop}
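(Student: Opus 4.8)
My plan is to run, one dimension higher, the argument of Theorem~\ref{gen type surfaces of deg 18} and Proposition~\ref{lem can surfaces are bilinked to del pezzo}, taking for $T_d$ the defective del Pezzo threefolds of Lemma~\ref{FanoD}. First I would check that $T_d$ lies on a pencil of cubics: by Riemann--Roch on a del Pezzo threefold (with $K_{T_d}=-2H$, $H^3=d$, and Kodaira vanishing applied to the ample class $3H-K_{T_d}=5H$) one gets $h^0(\mathcal{O}_{T_d}(3))=10d+4$, hence $h^0(\mathcal{I}_{T_d}(3))\ge \binom{9}{3}-(10d+4)=80-10d\ge 10$ for $d\le 7$. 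So a general pencil of cubics through $T_d$ is available, and I set $Y=C_1\cap C_2\subset\PP^6$ to be the corresponding codimension~$2$ complete intersection. This is a fourfold containing $T_d$ as a generalized divisor, with $K_Y=(K_{\PP^6}+3H+3H)|_Y=-H|_Y$.

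Next I would perform the bilinkage inside $Y$ as prescribed by Lemma~\ref{lem bilinkage 1}. Since $T_d$ is subcanonical of codimension~$3$ in $\PP^6$ it is Pfaffian by \cite{Walter}; writing (\ref{pfaff-resol}) with its bundle $E_d$ from \cite{CYTon1} (normalized so that the two cubics correspond to the trivial summands) the construction preceding Lemma~\ref{lem bilinkage 1} identifies the bilinked variety $X'$ with the codimension~$3$ Pfaffian variety attached to $E_d\oplus 2\mathcal{O}_{\PP^6}$. By \cite[Prop.~4.4]{Hartshornebil} this is a biliaison of height~$1$, so $X'\simeq T_d+H$ as generalized divisors on $Y$. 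Adjunction then gives
$$K_{X'}=(K_Y+X')|_{X'}=(-H+T_d+H)|_{X'}=T_d|_{X'},$$
and since, exactly as in the proof of Theorem~\ref{gen type surfaces of deg 18}, the system $|T_d+H|$ is base-point free on $Y\setminus\mathrm{Sing}(Y)$ away from $T_d$, a general $X'$ is disjoint from $T_d$ there; hence $K_{X'}=\mathcal{O}_{X'}$. Its degree is $\deg X'=(T_d+H)\cdot H^3=d+\deg Y=d+9$.

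To upgrade $X'$ to the \emph{smooth} Calabi--Yau threefold required by Construction~\ref{relation by bilinkage and deformation surf}, I would compare $E_d\oplus 2\mathcal{O}_{\PP^6}$ with the bundle $F_d$ defining Tonoli's degree $d+9$ family \cite{Tonoli} via (\ref{pfaff-resol}), precisely as in Proposition~\ref{lem can surfaces are bilinked to del pezzo}. The aim is to realize both as kernels of surjections between one and the same pair of decomposable bundles on $\PP^6$, with $F_d$ the kernel of a \emph{generic} such map, and then to build a one-parameter family $\mathfrak{E}$ over $\PP^6\times\C$ with central fibre $E_d\oplus 2\mathcal{O}_{\PP^6}$ and general fibre $F_d$. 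After checking the equality of section spaces $h^0(\bigwedge^2(E_d\oplus 2\mathcal{O}_{\PP^6})(t))=h^0(\bigwedge^2 F_d(t))$, \cite[Lem.~3.4]{CYTon1} then exhibits $X'$ as a flat degeneration of the general (smooth) Tonoli threefold $X$ of degree $d+9$, whose Calabi--Yau property is part of \cite{Tonoli}. For whichever of $d=6,7$ already satisfies $E_d\oplus 2\mathcal{O}_{\PP^6}=F_d$ the degeneration is trivial and $X'$ is itself smooth.

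The main obstacle is this last, bundle-theoretic, step. In contrast with Theorem~\ref{gen type surfaces of deg 18}, the degeneracy computation here shows that $\mathrm{Sing}(Y)$ is not a finite set but a curve $\Sigma\subset T_d$ (the locus where the differentials of the two cubics become proportional along $T_d$), which $X'$ meets in finitely many points; a Bertini argument on $Y\setminus\mathrm{Sing}(Y)$ alone therefore need not make $X'$ smooth, and one genuinely has to pass to the general deformation. The real work is thus to pin down the two decomposable bundles and the genericity of $F_d$ for each of $d=6,7$, and to verify the numerical coincidence of section spaces that powers \cite[Lem.~3.4]{CYTon1}; I expect to settle these resolutions by an explicit Macaulay2 computation, as was done in the surface case.
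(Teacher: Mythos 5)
Your proposal follows essentially the same route as the paper: bilinkage of $T_d$ through a complete intersection of two cubics, identification of the bilinked variety via Lemma \ref{lem bilinkage 1} with the Pfaffian variety of $E'_d\oplus 2\mathcal{O}_{\mathbb{P}^6}$, and then comparison with Tonoli's bundle $F_d$ (equality for $d\le 6$, a one-parameter bundle degeneration $\mathfrak{E}$ plus the section-space count and \cite[Lem.~3.4]{CYTon1} for $d=7$). Your added Riemann--Roch and adjunction checks, and your observation that $\mathrm{Sing}(Y)$ is a curve so that Bertini alone cannot give smoothness and the deformation step is genuinely needed, are consistent with (and slightly more explicit than) the argument given in the paper.
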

\begin{proof} Recall that del Pezzo threefolds are defined through the Pfaffian
construction by the bundles $E_d'$ from \cite[Rem.~4.6]{CYTon1}.
 Since for $d\leq 7$ the del Pezzo
threefold $T_d$ is contained in a complete intersection of two cubic
hypersurfaces, by Lemma \ref{lem bilinkage 1},
the del Pezzo threefold of degree $d$ is bilinked to a Gorenstein Calabi--Yau threefold (a Gorenstein threefold with $\omega_X=0$ and $h^1(X, \mathcal{O}_X)=h^2(X, \mathcal{O}_X)=0$)
$\tilde{X_d}$ defined by the bundle $E'_d\oplus
2\mathcal{O}_{\mathbb{P}^6}$ through the Pfaffian construction. Now for $d\leq
6$ we have
$E'_d\oplus 2\mathcal{O}_{\mathbb{P}^6}=F_d$ and hence $\tilde{X}_d$ is a smooth
Tonoli Calabi--Yau threefold of degree $d+9$. For $d= 7$ it is enough to
observe that there is a bundle $\mathfrak{E}$ on
$\mathbb{P}^6\times \mathbb{C}$ whose restriction to $\mathbb{P}^6\times \{0\}$
is
   $E'_7\oplus 2\mathcal{O}_{\mathbb{P}^6}$ and the restriction to
any fiber $\mathbb{P}^6\times \{\lambda \}$ for $\lambda\neq 0$ is isomorphic to
$F_7$. We also compute using Macaulay 2 that the dimension of the space
 of sections
  of  $\bigwedge^2(E'_7\oplus2\mathcal{O}_{\mathbb{P}^6})(1)$ is equal
to the dimension of the space of sections
 of $\bigwedge^2 F_7(1)$. We infer that
$\bigwedge^2(E'_d\oplus2\mathcal{O}_{\mathbb{P}^6}) (1)$ is
extendable to a section of $\bigwedge^2\mathfrak{E}(1)$.
 It follows
  from \cite[Lem.~3.4]{CYTon1} that $\tilde{X}_d$ is a
degeneration of a family of Tonoli Calabi--Yau threefolds.
\end{proof}
\begin{rem} Note that the above proof works for each del Pezzo threefold od degree $3\leq d\leq 5$ such that the half-anticanonical divisor gives an embedding into a linear subspace of $\PP^6$.
  In this way we obtain all smooth ACM Calabi--Yau threefolds.
\end{rem}
Let now study the most interesting case:
  let $T_8\subset \PP^6$ be a del Pezzo threefold of degree $8$ in $\mathbb{P}^6$
  which is the projection of $V_8\subset \PP^9$ as above.
Using the methods from \cite{HanKwak} we deduce that the ideal of $T_8\subset
\mathbb{P}^6$ is generated by $45$ quartics and is not contained in any cubic.

However, in order to perform a bilinkage we can find a special center of
projection $L\subset \PP^9$ also disjoint from the secant locus $Sec(V_8)\subset \PP^9$,
such that the image
 of the projection $T^L_8\subset \PP^6$ is contained in a $3$-dimensional system of cubics.
\begin{prop}\label{bilink dla fano st 8}
 There exists a center of projection $L\subset \PP^9$ such that $T^L_8\subset\mathbb{P}^6$ can be
bilinked to a Gorenstein Calabi--Yau threefold (not necessarily normal) $X'\subset \PP^6$ of
degree 17. Moreover, one can choose the bilinkage in such a way that $X'\subset \PP^6$ admits
a smoothing by the family of Tonoli Calabi--Yau threefolds of degree $17$ with
$k=9$.
\end{prop}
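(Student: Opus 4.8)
The plan is to follow the same probabilistic-lifting strategy that was used in Proposition \ref{prop special Lambda}, but now for the double Veronese threefold $V_8 \subset \PP^9$ in place of the triple Veronese surface $V_9 \subset \PP^9$. First I would set up the parameter space of projection centers: a linear map $\C^{10} \to \C^7$ is recorded by a $10 \times 7$ matrix $L$, and composing the Veronese embedding with the projection and then re-embedding $\PP^6$ by cubics produces, exactly as in the surface case, a finite-dimensional linear-algebra datum whose rank drop detects when $T^L_8$ lies in an unexpectedly large space of cubics. The target here is a $3$-dimensional system of cubics (rather than a pencil), so the relevant degeneracy condition is a higher corank of the associated universal matrix; I would verify computationally in characteristic $17$ that a random probabilistic search produces an explicit center $L_0$, disjoint from $\mathrm{Sec}(V_8)$, for which $T^{L_0}_8$ is contained in exactly a $3$-dimensional linear system of cubics and is smooth. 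The disjointness from the secant locus guarantees smoothness of the projected image, and this must be checked to persist after lifting.

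Next I would lift $L_0$ from $\mathbb{F}_{17}$ to characteristic $0$ by the Valuative Criterion for Properness, mimicking the Smith-normal-form argument of Proposition \ref{prop special Lambda}: choose a random line $\ell$ through a lift $N'_0$ of $L_0$, compute the Smith normal form of the universal matrix restricted to $\ell$ over $\mathbb{Q}[\lambda]$, isolate the determinantal polynomial $p(\lambda)$ whose vanishing records the corank jump, and check that its reduction mod $17$ retains the root $\lambda = 0$ with the expected multiplicity. This yields a number field $K$ and a prime $\mathfrak{p}$ over $17$ so that $L_0$ is the reduction of an $O_{K,\mathfrak{p}}$-valued center $L$, and the columns of the transforming matrix furnish the three cubics containing $T^L_8$ universally in $\lambda$, specializing correctly mod $17$. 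Having the $3$-dimensional system in hand, I would pick two general cubics $C_1, C_2$ from it, form the complete intersection $Y = C_1 \cap C_2$, and apply Lemma \ref{lem bilinkage 1} to bilink $T^L_8$ through $Y$ to a Gorenstein threefold $X' \subset \PP^6$; the adjunction/biliaison bookkeeping (as in the proof of Theorem \ref{gen type surfaces of deg 18}) gives $\omega_{X'} = 0$ and degree $9 + 8 = 17$, while the possible non-normality of $X'$ reflects the worse singularities of $Y$ coming from the larger cubic system.

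For the final assertion I would identify $X'$ with the Pfaffian variety attached to the bundle $E'_8 \oplus 2\mathcal{O}_{\PP^6}$ (with $E'_8$ the bundle defining $T_8$), and then argue that this degenerates inside the Tonoli family of type $k=9$. Concretely, I expect $E'_8 \oplus 2\mathcal{O}_{\PP^6}$ and the Tonoli bundle $F_9$ of type $k=9$ to arise as kernels of surjections between the same decomposable bundles, so that a one-parameter family of such surjections produces a bundle $\mathfrak{E}$ on $\PP^6 \times \C$ interpolating between them, exactly as in Proposition \ref{prop bilinkage dP 7 and CY 16} and Proposition \ref{lem can surfaces are bilinked to del pezzo}. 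Matching $h^0(\bigwedge^2(E'_8 \oplus 2\mathcal{O}_{\PP^6})(1)) = h^0(\bigwedge^2 F_9(1))$ by a Macaulay2 computation ensures the defining section extends across the family, and then \cite[Lem.~3.4]{CYTon1} delivers the smoothing. The main obstacle I anticipate is the lifting step: the larger corank ($3$-dimensional cubic system rather than a pencil) makes the relevant stratum of $\mathbb{P}(V)$ of higher expected codimension, so it is less automatic that a random line $\ell$ through $L_0$ meets it transversally in a single reduced branch lifting to characteristic $0$, and I would need to verify—probably by a careful Smith-normal-form factorization analogous to the degree-$60$/degree-$90$ splitting in Remark \ref{codim one locus of projections}—that the component through $L_0$ is genuinely distinct from the locus of centers meeting $\mathrm{Sec}(V_8)$ and that it persists after reduction. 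A secondary subtlety is controlling exactly which Tonoli type ($k=8,9,11$) the limit lands in; the text asserts it must be $k=9$, so the bundle-matching computation has to be set up against $F_9$ specifically and shown not to extend to the $k=8$ or $k=11$ bundles.
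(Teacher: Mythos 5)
Your first two steps (explicit/probabilistic search for a special center $L$ disjoint from $\mathrm{Sec}(V_8)$ with a $3$-dimensional system of cubics through $T^L_8$, followed by a height-one bilinkage through two of those cubics and the identification, via Lemma \ref{lem bilinkage 1}, of $X'$ with a Pfaffian variety of $E\oplus 2\mathcal{O}_{\mathbb{P}^6}$) match the paper's route; the paper simply exhibits an explicit center by seven linear equations and verifies the containment in three cubics by Macaulay2, carrying out the bilinkage as two explicit links through $(3,3,3)$ and $(3,3,4)$ complete intersections sharing the two cubics, so your extra care about the lifting is harmless but not where the paper puts its effort.

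The genuine gap is in your last step. You propose to match $h^0\bigl(\bigwedge^2(E'_8\oplus 2\mathcal{O}_{\mathbb{P}^6})(1)\bigr)=h^0\bigl(\bigwedge^2 F_9(1)\bigr)$ and conclude, as in the $d\le 8$ del Pezzo surface cases, that \emph{every} defining section extends across the interpolating family $\mathfrak{E}$. That equality fails here, and this failure is the whole point of the phrasing ``one can choose the bilinkage in such a way that'' and of Remark \ref{two components 1}: a general section of $\bigwedge^2(E\oplus 2\mathcal{O}_{\mathbb{P}^6})(1)$ does \emph{not} extend, and the resulting Gorenstein Calabi--Yau threefold lies on a different (possibly everywhere singular) component of the Hilbert scheme. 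The paper's actual argument is structural: the Hartshorne--Rao module of $X'$ (that of $T_8^L$ shifted by one) corresponds, via \cite[proof of Thm.~1.3]{CYTon1}, to a $\mathbb{P}^{13}\subset\langle\mathbb{P}^2\times\mathbb{P}^6\rangle$ containing a linear space $\mathcal{P}$ spanned by the graph of a double Veronese embedding of $\mathbb{P}^2$, while the $k=9$ Tonoli modules correspond to $\mathbb{P}^{15}$'s containing such a $\mathcal{P}$; the containment $\mathbb{P}^{13}\subset\mathbb{P}^{15}$ gives the flat degeneration of bundles (by scaling two columns of the $16\mathcal{O}\to 3\mathcal{O}(1)$ presentation to zero), and only a $9$-dimensional subspace of sections of $\bigwedge^2(E\oplus 2\mathcal{O}_{\mathbb{P}^6})(1)$ extends to $\bigwedge^2\mathfrak{E}(1)$. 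One must therefore choose the section $(\varphi,\mathbf{s}_1,\mathbf{s}_2,\mathbf{l})$ inside that subspace before invoking \cite[Lem.~3.4]{CYTon1}. This same $\mathcal{P}$-containment argument is also what pins down the type $k=9$ (as opposed to $k=8$ or $k=11$), which your bundle-matching proposal leaves open.
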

\begin{proof}
Recall that the $2\times 2$ minors of the
   matrix
$$A=\left[\begin{array}{cccc}
a&x&y&z\\x&b&t&u\\y&t&c&v\\ z&u&v&w
 \end{array}\right].$$
define the second Veronese embedding of $\mathbb{P}^3$ in
$\mathbb{P}^9(a,b,c,x,y,z,t,u,v,w)$.
Let us consider a special
  $\Lambda=\mathbb{P}^2$, the center of projection
defined by the following equations:
$$
 \left\{ \begin{array}{c} -2a+b+c-2y-z+2t+2u+2v-w=0\\ 2a+2b+c+x+y-z-t+2u-2v-w=0 \\
-a-2b-2c-2x+y+t+v+w=0\\ 2a+b-2c+2y-2z+2u-v+w=0\\
a+b-2z+2t+u+2v-w=0\\ -2c-2x+y-z-t+2u+w=0\\ -c+2x-y-2u-v+2w=0
           \end{array}
\right .$$
  Although it is hard to check that by hand, it is straightforward to verify using
Macaulay 2
that the image of the projection $T_8^L\subset \mathbb{P}^6$ is contained in three
independent cubics. Then the residual to $T_8^L$ of the intersection of these cubics
is a threefold $G$ of degree $19$ that is contained
in a quartic that does not contain $T_8^L$ (cf. \cite[Lemma
3.1]{GKapustkaprojdelpezzo}). The residual to
$G$ in the intersection of the cubics with the quartic containing $G$
is a threefold $X'$ of degree $17$ (we say that $X'$ is bilinked with $T_8^L$
through two cubics with height $1$).

As in
  Propositions \ref{lem can surfaces are bilinked to del pezzo} and
\ref{prop bilinkage dP 7 and CY 16},
by Lemma \ref{lem bilinkage 1}, we infer that there is a
Pfaffian variety $X$ associated to the vector bundle $E\oplus
2\mathcal{O}_{\mathbb{P}^6}$ which is a Gorenstein Calabi--Yau threefold of
degree 17.

For the second part, note that, by the general properties of bilinkage, the
threefold $X$ has the same
Hartshorne--Rao module as $T_8^L$ but shifted by one. It follows by \cite[proof of Thm. 1.3]{CYTon1} that this
Hartshorne--Rao module is determined by some special
 $\mathbb{P}^{13}\subset \langle\mathbb{P}^2\times \mathbb{P}^6\rangle$ containing a linear
space $\mathcal{P}$ spanned by the graph of some double Veronese embedding
(composed with a linear embedding) of $\mathbb{P}^2$ to $\mathbb{P}^6$.
 Observe moreover that, by \cite[Thm.~1.1]{CYTon1}, the Hartshorne--Rao module of a Tonoli Calabi--Yau
threefold of degree
17 with $k=9$ corresponds to a $\mathbb{P}^{15}\subset \langle\mathbb{P}^2\times
\mathbb{P}^6\rangle$ containing such a linear space $\mathcal{P}$. We now claim
that
   the bundle
$E\oplus2\mathcal{O}_{\mathbb{P}^6}$ appears as a flat deformation of a family of bundles
associated to such Calabi--Yau threefolds.
Indeed, the bundle $E\oplus2\mathcal{O}_{\mathbb{P}^6}$ is obtained as the kernel of a map
$16\mathcal{O}_{\mathbb{P}^6}\to 3\mathcal{O}_{\mathbb{P}^6}(1)$
defined by a matrix whose columns
span the $\mathbb{P}^{13}$, whereas the chosen Tonoli Calabi--Yau threefolds
appear as a Pfaffian variety associated to a bundle
obtained as the kernel of a similar
  map, but with columns spanning a
$\mathbb{P}^{15}$ containing our $\mathbb{P}^{13}$. It is easy to see that by
degenerating two columns of the map to zero (for example by multiplying them by
the parameter
$\lambda$) one
  obtains the desired flat deformation.

Observe now that there exists a subspace $V$ of dimension 9 of the space of
sections
  of $\bigwedge^2(E\oplus2\mathcal{O})(1)$,
consisting of
  all sections which admit extensions to our deformation family.
By \cite[Lem.~3.4]{CYTon1} the varieties given by these sections admit
smoothings to Tonoli Calabi--Yau threefolds of degree 17 with $k=9$.
\end{proof}
\begin{rem} \label{two components 1}Observe that, in the proof of Proposition \ref{bilink dla fano st 8},
not every section of $\bigwedge^2(E\oplus2\mathcal{O}_{\mathbb{P}^6})(1)$ extends to the
deformation family.
It follows that taking a general section of
$\bigwedge^2(E\oplus2\mathcal{O}_{\mathbb{P}^6})(1)$ in the proof of Proposition \ref{bilink
dla fano st 8}
one obtains a Gorenstein Calabi--Yau threefold representing
a different component of the Hilbert scheme of Calabi--Yau
threefolds consisting possibly of only singular threefolds.
\end{rem}
\section{Unprojections}\label{sec-unpr}
Recall that \textit{unprojection} is the inverse process to projection (see \cite{ReidPapadakis} for a general discussion).
  In this section we discuss  the relations between the constructions by  unprojection and by bilinkage in the context of submanifolds of codimension $3$.

For the construction of Calabi--Yau threefolds using bilinkages with del Pezzo
threefolds we are not restricted to
  starting from smooth Fano threefolds. A
natural choice for singular del Pezzo threefolds are cones over del Pezzo
surfaces. These are always contained in many
cubics and a bilinkage can be performed. This construction
  enables one to directly
relate the del Pezzo surface
  to the Calabi--Yau threefolds constructed.
When the cone is
  Gorenstein, it is related to so-called Kustin--Miller
unprojections.
This construction was studied in
\cite{Cascade,Unprojections,GKapustkaprojdelpezzo}.
In particular, a straightforward generalization of
\cite[Prop.~4.1]{Unprojections} (cf.~\cite{BoehmPapadakis}) shows that the unprojection of a codimension $3$ variety
defined by
Pfaffians of a decomposable bundle $E$ on $\mathbb{P}^n$
in a codimension 2 complete intersection can be seen as
  some special Pfaffian
variety associated to the bundle $E'\oplus
\mathcal{O}_{\mathbb{P}^{n+1}}(a_1)\oplus \mathcal{O}_{\mathbb{P}^{n+1}}(a_2)$
  where $E'$ denotes the
trivial extension of the decomposable bundle $E$ to $\mathbb{P}^{n+1}$, and $a_1$
and $a_2$ are appropriate numbers depending on the degrees of the generators of
the complete intersection and the degrees in the decomposition of $E$.

In the case of a complete intersection of two cubics containing a projectively
Gorenstein del
Pezzo surface in $\mathbb{P}^5$, the result of the unprojection is a special
Pfaffian variety associated to $F=E'\oplus 2\mathcal{O}_{\mathbb{P}^6}$. More
precisely, it is given as
the degeneracy locus of a skew-symmetric map $\rho: F^*\to F\otimes
\mathcal{O}_{\mathbb{P}^6}(1)$ corresponding to a section of the form
$$(\varphi,c_1,c_2,x_6)\in H^0({\textstyle\bigwedge}^2 F\otimes
\mathcal{O}_{\mathbb{P}^6}(1))=H^0({\textstyle\bigwedge}^2 E'\otimes
\mathcal{O}_{\mathbb{P}^6}(1))\oplus 2H^0(E'(1))\oplus
H^0(\mathcal{O}_{\mathbb{P}^6}(1)),$$
where $\varphi$ defines the cone over the del Pezzo surface,  $c_1,c_2$ are
sections which correspond via the Pfaffian resolution to two cubics containing
the del Pezzo surface, and $x_6$ is the new variable of $\mathbb{P}^6$.
The following follows
\begin{cor}
Every Tonoli Calabi--Yau threefold of degree $\leq 14$ is a smoothing of a
Gorenstein Calabi--Yau threefold
obtained as the unprojection of a del Pezzo surface of degree $d \leq 5$ in a
complete intersection of two cubics.
\end{cor}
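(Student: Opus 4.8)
The plan is to identify the unprojection output with the Gorenstein Calabi--Yau threefold produced by the bilinkage construction, and then invoke the smoothing already established in Proposition \ref{prop bilinkage dP 7 and CY 16}. First I would record the degree bookkeeping: a Tonoli Calabi--Yau threefold of degree $\leq 14$ has degree $d+9$ with $d\leq 5$, so it suffices to treat del Pezzo surfaces of degree $3\leq d\leq 5$, each of which sits in $\mathbb{P}^5$ (in a linear subspace when $d<5$) as a projectively Gorenstein surface $D$ contained in a complete intersection $Y$ of two cubics $c_1,c_2$. Since, by \cite{Tonoli} (cf.~\cite{CYTon1}), the Tonoli threefolds of each such degree form a single deformation family, it is enough to produce for every such $d$ one Gorenstein Calabi--Yau threefold that simultaneously arises as the asserted unprojection and degenerates from the Tonoli family.

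Next I would apply the unprojection description recalled above: the unprojection of $D$ in the complete intersection of the two cubics is the special Pfaffian variety $X\subset\mathbb{P}^6$ associated to $F=E'\oplus 2\mathcal{O}_{\mathbb{P}^6}$ and defined by the section $(\varphi,c_1,c_2,x_6)$, where $\varphi$ defines the cone over $D$ and $x_6$ is the new coordinate of $\mathbb{P}^6$. The key observation is that this section is precisely of the shape $(\varphi,\mathbf{s}_1,\mathbf{s}_2,\mathbf{l})$ occurring in Lemma \ref{lem bilinkage 1}, with $\mathbf{l}=x_6$ as the last entry. Lemma \ref{lem bilinkage 1} then shows that $X$ is bilinked, through the two cubics, to the cone over $D$, that is, to the del Pezzo threefold of degree $d$. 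In other words, the unprojection $X$ coincides with the Gorenstein Calabi--Yau threefold $\tilde{X}_d$ produced by Construction \ref{relation by bilinkage and deformation surf} in Proposition \ref{prop bilinkage dP 7 and CY 16} (and in its extension to $3\leq d\leq 5$).

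Finally I would invoke Proposition \ref{prop bilinkage dP 7 and CY 16} together with the remark following it: the Gorenstein threefold $\tilde{X}_d=X$ is a degeneration of the family of Tonoli Calabi--Yau threefolds of degree $d+9$, so every smooth Tonoli Calabi--Yau threefold of that degree is a smoothing of $X$. Combined with the deformation-equivalence of Tonoli threefolds of fixed degree, this yields the assertion for all degrees $\leq 14$.

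The main obstacle is the matching carried out in the second paragraph: one must check that the \emph{non-generic} section $(\varphi,c_1,c_2,x_6)$, with $x_6$ forced to be the linear unprojection variable rather than a general element of $H^0(\mathcal{O}_{\mathbb{P}^6}(d_1+d_2-2s-t))$, still defines a codimension $3$ Pfaffian variety and is exactly the special (central) member of the bilinkage family that Proposition \ref{prop bilinkage dP 7 and CY 16} smooths. This codimension count, together with the Gorenstein and Calabi--Yau properties of $X$, is precisely what the generalization of \cite[Prop.~4.1]{Unprojections} recalled above guarantees, so the verification reduces to unwinding that statement.
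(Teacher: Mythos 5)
Your argument is correct and matches the paper's intended (unwritten) proof: the corollary is derived directly from the preceding identification of the unprojection with the special Pfaffian section $(\varphi,c_1,c_2,x_6)$ of $\bigwedge^2(E'\oplus 2\mathcal{O}_{\mathbb{P}^6})(1)$, combined with Lemma \ref{lem bilinkage 1}, Proposition \ref{prop bilinkage dP 7 and CY 16} and the remark extending it to $3\le d\le 5$. The only imprecision is the claim that the unprojection \emph{coincides} with $\tilde{X}_d$: it is bilinked to the cone over the del Pezzo surface $D_d$ rather than to the smooth del Pezzo threefold $T_d$, so it is a different special section of the same bundle $F_d$; but since for $d\le 5$ the generic section of $\bigwedge^2 F_d(1)$ already cuts out a smooth Tonoli threefold, both are degenerations of the Tonoli family and the conclusion is unaffected.
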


In the case $d\geq 6$ a standard Kustin--Miller unprojection cannot be performed
because the del Pezzo surface is not projectively Gorenstein.
This
   is the first case in which the cone over the del Pezzo surface is not
Gorenstein
  at its vertex and, as such, it cannot be written in terms of the Pfaffian construction applied to a
vector bundle.
We can, however,
  somehow ignore this fact and propose a non-Gorenstein
unprojection instead of the standard construction due to Kustin and Miller.
  More precisely, by a non-Gorenstein unprojection we mean that having a variety
$D\subset Y\subset \mathbb{P}^N$ with $D$ not projectively Gorenstein we
construct
a variety $X\subset \mathbb{P}^{N+1}$ singular
  at some point $p$ such that the
projection of $X$ from $p$ is $Y$ and the exceptional locus is $D$.
\begin{prop} \label{subsection unprojection of del pezzo of degree 6}
 A Tonoli Calabi--Yau threefold of degree 15 can be obtained as a smoothing of a
singular variety obtained as a non-Gorenstein unprojection
 of a del Pezzo surface of degree $d=6$ in a complete intersection of two
cubics.
\end{prop}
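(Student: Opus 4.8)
The plan is to construct the required singular threefold as a bilinkage of the cone over $D$, mirroring the Gorenstein case of the Corollary above but with the projectively Gorenstein del Pezzo surfaces of degree $\le 5$ replaced by the degree $6$ surface, whose affine cone is no longer Gorenstein at the vertex. First I would fix a projected del Pezzo surface $D=D_6\subset\PP^5$ of degree $6$ lying in a complete intersection $Y_0$ of two cubics $C_1,C_2$; such a $D$ exists and sits in a pencil of cubics exactly as in the cases $d\le 7$ used in Proposition \ref{lem can surfaces are bilinked to del pezzo}, and a convenient explicit model can, if needed, be produced over a finite field and lifted to characteristic $0$ by the Macaulay2 plus valuative-criterion argument of Proposition \ref{prop special Lambda}.

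Next I would introduce a new coordinate $x_6$ and form the cone $C(D)\subset\PP^6$ with vertex $p=[0:\dots:0:1]$, a threefold of degree $6$ contained in the complete intersection $Y$ of the two cubic cones over $C_1,C_2$. This $C(D)$ is the singular del Pezzo threefold feeding Construction \ref{relation by bilinkage and deformation surf}, and $p$ is precisely the non-Gorenstein point. Performing the height-$1$ bilinkage of $C(D)$ through $Y$ as in Propositions \ref{prop bilinkage dP 7 and CY 16} and \ref{bilink dla fano st 8}, I obtain a threefold $X\subset\PP^6$ of degree $6+9=15$ linearly equivalent, as a generalized divisor on $Y$, to $C(D)+H$. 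Since biliaison shifts the Hartshorne--Rao module by one, the vanishings $h^1(\oo_X)=h^2(\oo_X)=0$ persist, and the adjunction computation $K_X=(K_Y+C(D)+H)|_X=0$ exhibits $X$ as a Calabi--Yau threefold. The unprojection content of the statement is then that $X$ is singular exactly at $p$, with $\mult_p X=6$ and tangent cone the cone over $D$, and that projection from $p$ contracts the exceptional locus to $D$ and maps $X$ birationally onto $Y_0$; these facts, together with Gorensteinness of $X$, I would read off the explicit equations (checking $\mathrm{Sing}(X)=\{p\}$ in Macaulay2), and they say exactly that $X$ is the non-Gorenstein unprojection of $D\subset Y_0$.

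For the smoothing I would argue as in the proof of Proposition \ref{bilink dla fano st 8}. The Hartshorne--Rao module of $X$ is the shift by one of that of $D$; I would match it with the module of a Tonoli Calabi--Yau threefold of degree $15$ and realize the data defining $X$ as a flat degeneration of the data defining the smooth Tonoli family, degenerating the extra columns of a presentation by a parameter $\lambda$. Checking that the relevant sections of the appropriate $\bigwedge^2(\cdot)(1)$ extend over this family and invoking \cite[Lem.~3.4]{CYTon1} then yields a smoothing of $X$ to a Tonoli Calabi--Yau threefold of degree $15$.

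The hard part will be the non-Gorenstein point $p$. Because $D$ is not projectively Gorenstein, $C(D)$ cannot be presented through the Pfaffian construction of a decomposable bundle, so neither the standard Kustin--Miller unprojection nor the clean form of Lemma \ref{lem bilinkage 1} applies verbatim; one must verify by hand and by computer that the naive bilinkage of $C(D)$ is nonetheless Gorenstein with an isolated singularity of the expected tangent cone, and---most delicately---that sufficiently many sections extend over the degeneration for the smoothing argument to go through, which is exactly the extension subtlety already flagged in Remark \ref{two components 1}.
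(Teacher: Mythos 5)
Your overall strategy --- realize the singular threefold as the special ``$\mathbf{l}=x_6$'' member of the biliaison class of the cone over $D_6$ and then smooth it into the Tonoli family --- matches the paper's, but two points need repair. First, you assert that $X$ is Gorenstein and apply adjunction $K_X=(K_Y+C(D)+H)|_X=0$; in fact the whole point of the proposition is that $X$ is \emph{not} Gorenstein at the vertex $p$ (the cone over the degree-$6$ del Pezzo surface fails to be Gorenstein there), which is precisely why $X$ cannot be presented as the Pfaffian degeneracy locus of a vector bundle and why neither Lemma \ref{lem bilinkage 1} nor Kustin--Miller applies. The paper instead defines $X'$ directly as the degeneracy locus of an explicit skew-symmetric map $\rho$ on the \emph{sheaf} $E'\oplus 2\mathcal{O}_{\mathbb{P}^6}$, where $E'$ is the kernel of $[x_0,\dots,x_5,0,0]\colon 8\mathcal{O}_{\mathbb{P}^6}\to\mathcal{O}_{\mathbb{P}^6}(1)$, built from $\phi$, the two cubics, and multiplication by $x_6$; the properties you propose to ``read off the explicit equations'' (singular only at $p$, tangent cone the cone over $D_6$, projection from $p$ onto the complete intersection with exceptional locus $D_6$) are then immediate from that form rather than from a generic bilinkage, and indeed a generic bilinkage of $C(D_6)$ would not be the unprojection at all.

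Second, and more seriously, the smoothing is exactly the step you defer, and it is the substance of the proof. You cannot invoke \cite[Lem.~3.4]{CYTon1} as in Proposition \ref{bilink dla fano st 8}, because at $\lambda=0$ the relevant object is a sheaf, not a bundle, so there is no a priori family of bundles over which to extend sections. The paper's resolution is concrete: write $\rho$ as a $10\times 10$ skew matrix $A$ of linear forms with syzygy $[x_0,\dots,x_5,0,0,0,0]\cdot A=0$; observe that the unprojection forces the entries $a_0,\dots,a_5$ to satisfy a Koszul relation, hence $(a_0,\dots,a_5)^T=B'(x_0,\dots,x_5)^T$ with $B'$ constant skew-symmetric, and forces $a_7,a_8,a_9$ to be independent of $x_6$, hence $(a_7,a_8,a_9)^T=D'(x_0,\dots,x_5)^T$ with $D'$ constant; then perturb $A$ to $A_\lambda$ by adding $\lambda x_6 B'$ and $\lambda x_6 D'$ in the appropriate blocks. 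The perturbed matrix satisfies $[x_0,\dots,x_5,\lambda x_6,0,0,0]\cdot A_\lambda=0$, so for $\lambda\neq 0$ it is a section of $\bigwedge^2\bigl(\Omega^1_{\mathbb{P}^6}(1)\oplus 3\mathcal{O}_{\mathbb{P}^6}\bigr)(1)$, i.e.\ of the degree-$15$ Tonoli bundle, and the resulting family of Pfaffian ideals is flat at $\lambda=0$. Without producing this (or an equivalent) explicit family, your argument establishes the existence of the singular variety but not its smoothability; as Remark \ref{two components} stresses, the construction would fail if the $a_i$, $i\in\{7,8,9\}$, involved $x_6$, so the smoothing genuinely depends on the special block structure of the unprojection matrix and cannot be obtained by a generic extension-of-sections argument.
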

\begin{proof}  Observe that, although the cone over the del Pezzo surface $D_6$
is not Gorenstein, we have its description in terms of some similar
Pfaffian construction applied to the
  sheaf $E'$,
trivially extending $E$ to $\mathbb{P}^6$. In this case the special Pfaffian
variety associated to the sheaf $F=E'\oplus 2\mathcal{O}_{\mathbb{P}^6}$
obtained by copying the unprojection procedure above in the context of sheaves
is a non-Gorenstein variety $X$. We shall prove that it admits a smoothing to
a Calabi--Yau threefold of degree $15$.
More precisely, we proceed in the following way.
We start with a del Pezzo surface $D_6$. It is obtained as a Pfaffian variety
associated to the bundle $E=\Omega^1_{\mathbb{P}^5}(1)\oplus 2
\mathcal{O}_{\mathbb{P}^5}$, i.e. defined as the degeneracy locus of a general
skew-symmetric map
$\phi:E^*(-1)\to E$. We consider two cubics in the ideal of the del Pezzo
surface.
From the Pfaffian
sequence they correspond to two sections of $E(1)$ giving a map
$\psi:2\mathcal{O}_{\mathbb{P}^5}(-1)\to E$. We can now extend the bundle $E$ to a sheaf
$E'$ on $\mathbb{P}^6$ defined as the kernel of the map
$8\mathcal{O}_{\mathbb{P}^6} \to \mathcal{O}_{\mathbb{P}^6}(1)$ given by
the matrix $[x_0,\dots,x_5,0,0]$.
Then we consider the
  skew-symmetric map $\rho:(E'\oplus
2\mathcal{O}_{\mathbb{P}^6})^{\vee}(-1) \to  E'\oplus
2\mathcal{O}_{\mathbb{P}^6}$ defined by $\phi,\psi$ and multiplication by the
new variable $x_6$.
The degeneracy locus of $\rho$ is a codimension $3$ variety $X'$ which is
singular
  at
the point $(x_0,\dots,x_6)=(0,\dots,0,1)$, the tangent cone being the cone over
the projected del
Pezzo surface $D_6$. The latter singularity is not Gorenstein.
  Hence our
variety cannot be described as a Pfaffian variety associated to a vector bundle
(we have its description as a kind of
Pfaffian variety associated to the sheaf $E'$). It is however straightforward to
check that the projection
  from the point
$(0,\dots, 0,1)\in \mathbb{P}^6$ maps
$X'$ to the complete intersection of the two cubics containing the del Pezzo
surface,
and the exceptional locus is $D_6$.

Having the description of $X'$ in terms of Pfaffians (associated to a sheaf), we
perform a similar reasoning as in
\cite[Prop.~7.2]{CYP6} and prove that
  $X'$, though  not Gorenstein and not
  normal, can nonetheless be smoothed to a
Tonoli Calabi--Yau threefold of degree 15. More precisely, following \cite{Catanese} we
can consider $\rho$ as a
$10 \times 10$ skew-symmetric matrix $A$ of linear forms satisfying the
the following
$$[x_0\dots x_5,0,0,0,0]\cdot A=0.$$
The degeneracy locus of $\rho$ is given by $8\times 8 $ Pfaffians of $A$.
Observe that by the shape of unprojection and the assumption on $A$ we can write
$A$ in the
  form
\[
\left(
\begin{BMAT}(rc){c|c|c}{c|c|c}
B & \begin{BMAT}(rc){c}{ccc}
        a_0\\
        \vdots\\
        a_5
    \end{BMAT}&D^T\\
\begin{BMAT}(rc){ccc}{c}
-a_0&\dots&-a_5
\end{BMAT} & 0 &\begin{BMAT}(rc){ccc}{c}a_7&a_8&a_9\end{BMAT}
\\
D&\begin{BMAT}(rc){c}{ccc}
        -a_7\\
        -a_8\\
        -a_9
    \end{BMAT}&K\end{BMAT}
\right),
\]
where the variable $x_6$ appears only in the matrix $K$
  (more precisely, in a
$2\times 2$ skew-symmetric submatrix of $K$).
Since $[a_0\dots a_5]$ satisfies a Koszul relation, there exists a skew-symmetric
$5\times 5$ matrix $B'$ with complex entries such that
$$\left[\begin{matrix} a_0\\ \vdots\\a_5 \end{matrix} \right]=B'\cdot
\left[\begin{matrix}x_0\\ \vdots\\x_5 \end{matrix} \right].$$
Moreover since $a_i$ do not depend on $x_6$, there is clearly a unique $3\times
6$ matrix $D'$ with complex entries such that
$$\left[\begin{matrix}a_7\\ a_8 \\a_9 \end{matrix} \right]=D'\cdot
\left[\begin{matrix}x_0\\ \vdots\\x_5 \end{matrix} \right].$$

Consider now the family of skew-symmetric
  matrices
\[A_{\lambda}=
\left(
\begin{BMAT}(rc){c|c|c}{c|c|c}
B +\lambda x_6 B'& \begin{BMAT}(rc){c}{ccc}
        a_0\\
        \vdots\\
        a_5
    \end{BMAT}&D^T+(\lambda x_6 D')^T\\
\begin{BMAT}(rc){ccc}{c}
-a_0&\dots&-a_5
\end{BMAT} & 0 &\begin{BMAT}(rc){ccc}{c}a_7&a_8&a_9\end{BMAT}
\\
D+\lambda x_6 D'&\begin{BMAT}(rc){c}{ccc}
        -a_7\\
        -a_8\\
        -a_9
    \end{BMAT}&K\end{BMAT}
\right),
\]
parametrized by $\lambda\in \mathbb{C}$. Observe that in this case
$[x_0,\dots
x_5, \lambda x_6, 0,0,0]\cdot A_{\lambda}=0$. Hence the matrices $A_{\lambda}$
induce sections of $\bigwedge^2 E_{\lambda}(1)$, with $E_\lambda$ isomorphic to
$\Omega^1_{\mathbb{P}^6}(1)\oplus 3
\mathcal{O}_{\mathbb{P}^6}$, and the ideals generated by their $8\times 8$
Pfaffians correspond to Pfaffian varieties associated to $E_{\lambda}$.
To finish the proof, it is enough to observe that the
  above family is flat
around $\lambda=0$.
\end{proof}

\begin{rem}\label{two components}
Observe that
  we have used the special form of the section
defining the unprojected variety. In particular the construction could not be
performed if we were unable to find a matrix $A$ with
  all the $a_i$
for $i \in \{7,8,9\}$ independent of $x_6$. This suggests that the Hilbert
scheme of Calabi--Yau threefolds of degree $15$ has at least two components: one
giving the Tonoli family of degree $15$; the other parametrizing a family of
  non-Gorenstein threefolds probably birational to the degree $15$ threefolds in
$\mathbb{P}(1,1,1,1,1,1,1,2)$ constructed in \cite{GKapustkaprojdelpezzo}.
  If that is indeed the case, the unprojected threefolds $X'$ above would correspond to some
points in the intersection of these two components.
\end{rem}

One can try to extend the construction from the case of $d=6$ to higher degree
del Pezzo surfaces.
For instance,
   as in the proof of Proposition \ref{bilink dla fano st
8}, the Hartshorne--Rao modules of the cones over $D^1_8$ and
$D^2_8$ are degenerations of Hartshorne--Rao modules
associated to Tonoli Calabi--Yau threefolds of degree $17$ and $k=9$, $11$
respectively. The sheafified first syzygy modules of their
Hartshorne--Rao modules are not
  vector bundles, but more general sheaves. However, one
can still hope that,
as in the case of degree $d=6$, these non-Gorenstein threefolds admit smoothing
to Calabi--Yau threefolds.
Proceeding further, we compute the dimension of the space of sections of the
twisted second wedge power corresponding to the unprojection
and in each case we obtain a bigger space than the space of sections of
  the second
wedge power of the bundle defining the appropriate families of Tonoli
Calabi--Yau threefolds.
  Thus  again (cf. Remarks \ref{two components 1}, \ref{two components}) we obtain distinct components of the
Hilbert scheme of Calabi--Yau threefolds of degree 17 in $\mathbb{P}^6$. The
smoothing might possibly be performed only for very special unprojections.
It is also not clear whether the varieties representing the general points of
any of these components are smooth Calabi--Yau
  threefolds.

\subsection{Calabi--Yau threefolds of degree 18 via unprojection}
 Using the
method of unprojection,  we can also construct a
non-Gorenstein projective threefold
with one singular point with singularity locally isomorphic to the cone over a
projected del Pezzo surface of degree $9$. More precisely, let us start with a
del
Pezzo surface
$D^\Lambda_9$ from Proposition \ref{prop special Lambda}. It is contained in a
complete intersection $Y$ of two cubic hypersurfaces. Let $E$ be the vector
bundle on $\mathbb{P}^5$
defining $D^\Lambda_9$. Consider the non-Gorenstein unprojection of
$D^\Lambda_9$
in $Y$, i.e. a threefold $X$ defined as the degeneracy locus of a special
skew-symmetric map between the sheaf
$E'\oplus 2\mathcal{O}_{\mathbb{P}^6}$ and its twisted dual, as in the case of
degree $d=6$. Here, $E'$ is the sheaf on $\mathbb{P}^6$ obtained as the trivial
extension of $E$.
In this case, $X$ is a threefold with one singular point such that the projection
  from this point is $Y$ and the exceptional locus is $D^\Lambda_9$. Moreover, $X$
has degree $18$ and is birational to a Calabi--Yau threefold. Unfortunately, $X$
has no smoothing.
\bigskip

\bibliographystyle{alpha}
\bibliography{biblio}

\begin{minipage}{15cm}
 Department of Mathematics and Informatics,\\ Jagiellonian
University, {\L}ojasiewicza 6, 30-348 Krak\'{o}w, Poland.\\
\end{minipage}

\begin{minipage}{15cm}
Institute of Mathematics of the Polish Academy of Sciences,\\
ul. \'{S}niadeckich 8, P.O. Box 21, 00-956 Warszawa, Poland.\\
\end{minipage}

\begin{minipage}{15cm}
Institut f\"ur Mathematik\\
Mathematisch-naturwissenschaftliche Fakult\"at\\
Universit\"at Z\"urich, Winterthurerstrasse 190, CH-8057 Z\"urich\\
\end{minipage}

\begin{minipage}{15cm}
\emph{E-mail address:} grzegorz.kapustka@uj.edu.pl\\
\emph{E-mail address:} michal.kapustka@uj.edu.pl
\end{minipage}

\end{document}